\newtheorem{theorem}{Theorem}[section]
\newtheorem{theoremx}{Theorem}
\newtheorem{lemma}[theorem]{Lemma}
\newtheorem{proposition}[theorem]{Proposition}
\theoremstyle{definition}
\newtheorem{definition}[theorem]{Definition}
\newtheorem{remark}[theorem]{Remark}
\theoremstyle{remark}
\numberwithin{equation}{section}
\let\epsilon\varepsilon
\newcommand{\alg}		{{\operatorname{\mathrm{alg}}}}
\newcommand{\cL}{\mathcal{L}}
\newcommand{\hh}{\mathbf{h}}
\newcommand{\uu}{\mathbf{u}}
\newcommand{\vv}{\mathbf{v}}
\newcommand{\ww}{\mathbf{w}}
\renewcommand{\gg}{\mathbf{g}}
\DeclareMathOperator{\Link}{Link}
\DeclareMathOperator{\Star}{Star}
\DeclareMathOperator{\Tr}{Tr}
\DeclareMathOperator{\Span}{Span}
\DeclareMathOperator{\Mat}{Mat}
\DeclareMathOperator{\Nor}{Nor}
\DeclareMathOperator{\qNor}{qNor}
\newcommand{\CC}{\mathbb{C}}
\newcommand{\EE}{\mathbb{E}}
\newcommand{\FF}{\mathbb{F}}
\newcommand{\ZZ}{\mathbb{Z}}
\newcommand{\calL}{\mathcal{L}}
\title[Right-angled Coxeter groups with a strongly solid von Neumann algebra]{Classification of right-angled Coxeter groups  with a strongly solid von Neumann algebra}
\date{\noindent \today.  MSC2010: 46L10,  46L51. Key words: strong solidity, von Neumann algebras,  right-angled Coxeter groups.   MC is  supported by the NWO Vidi grant VI.Vidi.192.018 `Non-commutative harmonic analysis and rigidity of operator algebras'. }
\author{Matthijs Borst and Martijn Caspers}
\address{TU Delft, EWI/DIAM,
	P.O.Box 5031,
	2600 GA Delft,
	The Netherlands}
\email{M.J.Borst@tudelft.nl}
\email{M.P.T.Caspers@tudelft.nl}
\begin{document}

\maketitle

\begin{abstract} Let $W$ be a finitely generated right-angled Coxeter group with group von Neumann algebra $\mathcal{L}(W)$. We prove the following dichotomy: either $\mathcal{L}(W)$ is strongly solid or $W$ contains $\mathbb{Z} \times \mathbb{F}_2$ as a subgroup.
This proves in particular strong solidity of $\mathcal{L}(W)$ for all non-hyperbolic Coxeter groups that do not contain $\mathbb{Z} \times \mathbb{F}_2$.
 
\end{abstract}

In their seminal paper \cite{OzawaPopaAnnals}  Ozawa and Popa gave a new proof of the fact that the free group factors $\mathcal{L}(\mathbb{F}_n)$ do not possess a so-called Cartan subalgebra. This result was obtained earlier by Voiculescu \cite{VoiculescuGAFA} using his free entropy. In fact Ozawa and Popa proved a stronger result: they showed that whenever $A$ is a diffuse amenable von Neumann subalgebra of $M := \mathcal{L}(\mathbb{F}_n)$ then the normaliser of $A$ in $M$ (see preliminaries) generates a von Neumann algebra that is still amenable. The latter property was then called `strong solidity' after Ozawa's notion of solid von Neumann algebras \cite{OzawaActa}.  Nowadays there are many examples of strongly solid von Neumann algebras. In particular in  \cite{ChifanSinclairENS}, and afterwards also \cite{PopaVaesCrelle}, it was proved that the group von Neumann algebra of any hyperbolic icc group is strongly solid.

A (finitely generated) right-angled Coxeter group $W$  with finite generating set $\Gamma$  subject to the relations that any two generators are either free, or they commute. This can also be described as follows. Let $\Gamma$ be a simple graph (finite, undirected, no double edges, no self-loops) and also write $\Gamma$ for its vertex set. Then $W$ is the finitely presented group with generating set given by the vertices of $\Gamma$ subject to the relations that $u^2 = e$ for all $u \in \Gamma$ and  $u,v \in \Gamma$ commute if they share an edge, and otherwise they are free. In terms of graph products \cite{greenGraphProductsGroups1990} this means that $W = \ast_{\Gamma} \mathbb{Z}_2$ where $\mathbb{Z}_2$ is the group with 2 elements and $\ast_\Gamma$ denotes the graph product over $\Gamma$, with all vertex groups equal to $\mathbb{Z}_2$.

It is natural to ask which properties of a graph and its vertex groups are reflected in the graph product von Neumann algebra. In the extreme case that $\Gamma$ has no edges this leads to the free factor problem \cite{DykemaDuke}, \cite{RadulescuInventiones}, \cite[Paragraph before Definition 2.2]{CaspersSkalskiWasilewski} and one may wonder how far a general Coxeter group is away from this situation. More precisely, our aim is to understand how much of the graph $\Gamma$, and thus commutation relations, can be recovered from $\mathcal{L}(W)$. Such rigidity questions are relevant for general graph products of operator algebras \cite{CaspersFima} and recent rigidity results   have been obtained in \cite{CaspersAPDE}, \cite{BorstCaspersWasilewski}, \cite{ChifanSri}, \cite{CDD1}, \cite{CDD2}. In particular in  \cite{CDD1}, \cite{CDD2} rather strong  rigidity results are obtained   in case the graph $\Gamma$ as a particular flower-type shape and the vertex groups are certain property (T) generalized wreath products. This paper concerns the other extreme case: the vertex groups are as small as possible and we try to recover some of the structure of $\Gamma$.

\vspace{0.3cm}

The main result of this paper completes the classification of right-angled Coxeter groups $W$ for which $\mathcal{L}(W)$ is strongly solid. This result completely clarifies \cite[Remark 5.6]{CaspersAPDE}. Namely we prove the following dichotomy.

\begin{theoremx}
    Let $W$ be a finitely generated right-angled Coxeter group. Then one of the following two holds:
    \begin{enumerate}
      \item\label{Item=Dicho1} $W$ contains $\mathbb{Z} \times \mathbb{F}_2$ as a subgroup.
      \item $\cL(W)$ is strongly solid.
    \end{enumerate}
\end{theoremx}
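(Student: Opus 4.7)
I would prove the dichotomy by handling the two implications separately.

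First, mutual exclusivity. Suppose $H := \mathbb{Z} \times \mathbb{F}_2 \leq W$. Then $\cL(H) \cong L^\infty(\mathbb{T}) \,\bar\otimes\, \cL(\mathbb{F}_2)$ sits inside $\cL(W)$ with a normal faithful trace-preserving conditional expectation, and the diffuse amenable subalgebra $A := L^\infty(\mathbb{T}) \otimes 1 \subseteq \cL(H)$ has normaliser in $\cL(H)$ generating $1 \otimes \cL(\mathbb{F}_2)$, which is non-amenable. Since strong solidity passes to von Neumann subalgebras with expectation, $\cL(W)$ cannot be strongly solid.

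For the converse, assume $\mathbb{Z} \times \mathbb{F}_2 \not\leq W$. I would first extract a purely combinatorial characterisation on $\Gamma$ equivalent to this hypothesis. The natural candidate is that for every pair $u,v$ of non-adjacent vertices of $\Gamma$, the common link $L := \mathrm{link}(u) \cap \mathrm{link}(v)$ induces a subgraph whose RACG $W_L$ is virtually abelian (equivalently, $L$ is a complete multipartite graph, possibly with isolated vertices). The forward implication is immediate --- $g := uv$ has infinite order and is centralised by $W_L$, so any non-abelian free subgroup of $W_L$ yields $\mathbb{Z} \times \mathbb{F}_2 \subseteq \langle g \rangle \times W_L$; the reverse would rely on the normal-form and deletion-condition machinery for right-angled Coxeter groups and a Davis-style description of centralisers of elements of infinite order.

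Given the combinatorial reduction, I would then establish strong solidity of $\cL(W)$ by an induction on $\Gamma$, using the graph-product structure to split $W$ as an amalgamated free product (or HNN extension) over a clique subgroup $W_K \cong (\mathbb{Z}_2)^{|K|}$, which is finite and hence amenable. Closure of strong solidity under amalgamation over amenable subalgebras (Boutonnet--Houdayer / Chifan--Sinclair), together with the base cases of hyperbolic RACGs \cite{PopaVaesCrelle} and the partial rigidity results of \cite{CaspersAPDE}, \cite{BorstCaspersWasilewski}, would then close the induction. The main obstacle, I expect, lies precisely in this amalgamation step: standard closure of strong solidity under amalgamated free products requires more than amenability of the core --- a weak-mixing or $s$-malleable-deformation hypothesis is also needed --- and translating the combinatorial ``no $\mathbb{Z} \times \mathbb{F}_2$'' condition into the analytic mixing property, together with constructing a deformation on $\cL(W)$ compatible with the graph-product structure and trivial on the clique subalgebras, is where the real work should lie.
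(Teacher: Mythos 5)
Your first half (mutual exclusivity) and your combinatorial reduction are both sound and agree with the paper: the condition you isolate --- every common link of a non-adjacent pair generates a virtually abelian special subgroup --- is exactly the paper's condition that $\Gamma$ contain neither $K_{2,3}$ nor $K_{2,3}^+$ as an induced subgraph, and the only direction of the equivalence your argument actually needs (no $\mathbb{Z}\times\mathbb{F}_2$ implies the graph condition) is the immediate one.

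The gap is in the analytic half. The decomposition you propose --- an amalgamated free product over a clique subgroup $W_K\cong(\mathbb{Z}_2)^{|K|}$ --- does not exist in general. The decomposition available from the graph-product structure (\cref{prop:graph-product-decomposition}) is $M_\Gamma = M_{\Star(v)} \ast_{M_{\Link(v)}} M_{\Gamma\setminus\{v\}}$, and $\Link(v)$ is typically not a clique: already for $\Gamma=K_{2,2}$ one amalgamates over $\cL(D_\infty)$, which is diffuse, and $W_{K_{2,2}}=D_\infty\times D_\infty$ is precisely the first case where the theorem is new. Worse, $M_{\Link(v)}$ can even be non-amenable, and the inclusion $M_{\Link(v)}\subseteq M_\Gamma$ is not mixing, so neither the Boutonnet--Houdayer/Chifan--Sinclair closure results nor a malleable-deformation argument of the type you sketch applies; you correctly sense that this is ``where the real work should lie,'' but the remedy you point to is not available here. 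The paper's actual resolution is different in kind: it applies Vaes's three alternatives (\cref{thm:prelim:alternatives}) to the decomposition at \emph{every} vertex $v$ simultaneously; the first two alternatives are dispatched by the induction hypothesis together with control of (quasi-)normalizers (\cref{Prop=LocateNormalizerNew}) and the $K_{2,3}/K_{2,3}^+$ exclusion, and in the residual case one knows that $\Nor_M(A)''$ is amenable relative to $M_{\Link(v)}$ for all $v$ at once. The key new ingredient, for which your plan has no substitute, is \cref{Thm=Square}: relative amenability with respect to a family $M_{\Gamma_1},\dots,M_{\Gamma_n}$ with $\bigcap_i\Gamma_i=\emptyset$ forces genuine amenability; the condition $\bigcap_v\Link(v)=\emptyset$ is arranged by first splitting off the finite direct factor $W_{\Gamma'}$ with $\Gamma'=\bigcap_v\Star(v)$. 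Without this, or some replacement for it, the induction does not close.
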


In fact our main theorem is  stronger, see Theorem \ref{Thm=Dichotomy}, which also shows that $\mathbb{Z} \times \mathbb{F}_2$  must in situation \eqref{Item=Dicho1} be located in a special subgroup. It easily follows that if $W$ contains $\mathbb{Z} \times \mathbb{F}_2$  then $\mathcal{L}(W)$ cannot be strongly solid. The difficult part is to show that this is the only obstruction. Our strong solidity result  is new in case $W$ is not hyperbolic and not equal to some free product of amenable Coxeter subgroups. There are many such Coxeter groups. Indeed, $\mathbb{Z}_2 \ast \mathbb{Z}_2 = D_\infty$ where $D_\infty$ is the infinite dihedral group.  Then $D_\infty \times D_\infty$ is not hyperbolic and nor is any $W$ that contains $D_\infty \times D_\infty$.

Our proof is based on rigidity of amalgamated free products and uses some of the main results of Ioana \cite{IoanaAnnENS} and Vaes \cite{VaesPrims}; in particular we use the three alternatives of \cite[Theorem A]{VaesPrims}. We show that these results can be exploited in a clean and elegant way to get  stronger results  by combining multiple amalgamated free product decompositions at the same time. This requires to have sufficient control over embeddings of normalizers in graph products and relative amenability with respect to various subgroups; we obtain such results in Sections \ref{Sect=QuasiNormalizer} and \ref{Sect=RelativeAmenability}. We recommend the reader to look at the proof of the main theorem first (Theorem \ref{Thm=MainImplication})  which contains most conceptual parts of the proof strategy. Then in Sections \ref{Sect=QuasiNormalizer} and \ref{Sect=RelativeAmenability}   we collect the necessary results on the location of normalizers and we prove a result on relative amenability for graph products, which is of independent interest.

\section{Preliminaries}\label{Sect=Preliminaries}

\subsection{Von Neumann algebras, Jones projection, normalizers, strong solidity}
Let $B(H)$ denote the bounded operators on a Hilbert space $H$. The  group von Neumann algebra of a discrete group $G$ is denoted by $\calL(G)$.
  Let $M$ be a von Neumann algebra which in this paper is always {\it assumed to be finite} with faithful normal tracial state $\tau$. Let $M'$ denotes the commutant of $M$.  We call $M$ \textit{diffuse} if $M$ does not contain minimal non-zero projections. For an inclusion $Q \subseteq M$ of finite von Neumann algebras
  we write $\mathbb{E}_Q: M \rightarrow Q$ for the normal  trace preserving conditional expectation. For a unital inclusion $Q \subseteq M$ we let $e_Q: L^2(M) \rightarrow L^2(M)$ be the {\it Jones projection}  which is the orthogonal projection onto $L^2(Q)$.
    Then the von Neumann algebra generated by $M$ and $e_Q$ denoted by $\langle M, e_Q \rangle$ is called the Jones extension, see \cite{jonesIntroductionSubfactors1997}. There exists a normal faithful semi-finite operator valued weight   \cite{HaagerupMathScan1},  \cite{HaagerupMathScan2}
  $\Tr_Q$ from a domain in $\langle M, e_Q \rangle$ to  $M$ for which all operators $x e_Q y, x,y \in M$ are in this domain and such that $\Tr_Q( x e_Q y ) = xy$. We then set the trace $\Tr = \tau \circ \Tr_Q$.

  For a von Neumann subalgebra $A \subseteq M$ we denote
\[
\begin{split}
	\Nor_{M}(A) &:= \{u \in M \text{ unitary} \mid  u A u^\ast = A\}\\
        \qNor_{M}^{(1)}(A) &:= \{x\in M \mid\exists x_1,\ldots, x_n\in  M, {\rm s.t. }
 A x\subseteq \sum_{i=1}^n x_i A \}\\
         \qNor_{M}(A) &:=  \qNor_{M}^{(1)}(A)\cap  \left(\qNor_{M}^{(1)}(A)\right)^*
\end{split}
\]
which are called the normalisers, one-sided quasi-normalisers and  quasi-normalisers respectively. We remark that $\Nor_{M}(A)$ is a group,  $\qNor_{M}^{(1)}(A)$ is an algebra and $\qNor_{M}(A)$ is a $*$-algebra.
Furthermore we see that $\Nor_{M}(A)\subseteq \qNor_{M}(A)$. We have that  $A$ and $A'\cap M$ are contained in the von Neumann algebra $\Nor_{M}(A)''$.

\begin{definition}(Amenability)
	A von Neumann algebra $M \subseteq B(H)$ is called {\it amenable} if there exists a (possibly non-normal) conditional expectation $\mathbb{E}: B(H) \rightarrow M$, i.e. $\mathbb{E}$ is a completely positive map that restricts to the identity on $M$.
\end{definition}

\begin{definition}[Strong solidity]
    A von Neumann algebra $M$ is called {\it strongly solid} if for every diffuse amenable unital von Neumann subalgebra $A \subseteq M$ the normalizer $\Nor_{M}(A)$ generates an amenable von Neumann algebra.
\end{definition}

We remark that amenability passes to von Neumann subalgebras and therefore if $M$ is strongly solid, then so is every von Neumann subalgebra of $M$.

\subsection{Intertwining-by-bimodules} We recall Popa's intertwining-by-bimodules theory \cite{PopaAnnals}, \cite{PopaInventionesI}.

\begin{definition}[Embedding $A \prec_{M}  B$]\label{Dfn=Intertwine}
	For von Neumann subalgebras $A, B \subseteq M$ we will say that \textit{a corner of $A$ embeds in $B$ inside $M$} (denoted $A \prec_{M} B$) if one of the following equivalent statements hold:
 \begin{enumerate}
     \item\label{Item=Intertwine1}  There exists projections $p\in A$, $q\in B$, a  normal $\ast$-homomorphism $\theta: p A p \to q B q$ and a non-zero partial isometry $v\in q M p$ such that
	$\theta(x) v = vx$ for all $x\in p A p$.
 \item\label{Item=Intertwine2} There exists no net of unitaries $(u_i)_i$ in $A$ such that for every $x,y \in M \mathbf{1}_B$ we have that $\Vert \mathbb{E}_B(x u_i y) \Vert_2 \rightarrow 0$.
 \end{enumerate}
\end{definition}

\begin{remark}\label{Rmk=Support}
In  Definition \ref{Dfn=Intertwine} \eqref{Item=Intertwine1}  the range projection $vv^\ast$ is contained in $ \theta(pAp)'$. We may further assume without loss of generality that $q$ equals the support of $\mathbb{E}_B(v v^\ast)$.
\end{remark}

\subsection{Graphs}
Let $\Gamma$ be a simple graph (undirected, no double edges, no edges whose startpoint and endpoint are the same) with vertex set $\Gamma$ and edge set $E\Gamma$. Note the mild abuse of notation as we write $\Gamma$ for both the vertex set as well as the graph, which should not lead to confusion. We will not further use the edge set in our notation.

For a non-empty subset $S\subseteq \Gamma$ we will denote
\begin{align}
	\Link(S) &= \{v\in \Gamma \mid v \textrm{ and } s  \textrm{ share an edge for all } s\in S\}
\end{align}
We note that $\Link(S) = \bigcap_{s\in S}\Link(s)$. Furthermore, for a vertex $v\in \Gamma$ we denote $\Star(v) = \{v\}\cup \Link(v)$. We will write $\Lambda \subseteq \Gamma$ and say that $\Lambda$ is  a subgraph of $\Gamma$ in case the vertex set of $\Lambda$ is a subset of the vertex set of $\Gamma$ and two vertices in $\Lambda$ share an edge if and only if they share an edge in $\Gamma$.  In other words, subgraphs are understood to be complete subgraphs. We remark that our notation $\Link(S)$ always stands for the Link of the set $S$ w.r.t the large graph $\Gamma$ that is fixed, and not w.r.t. subgraphs of $\Gamma$.

\subsection{Coxeter groups}
To every finite simple graph $\Gamma$ we associate a right-angled Coxeter group $W := W_{\Gamma}$ given by the following presentation:
\[
	W_{\Gamma} = \langle \Gamma \mid  v^2 = e \text{ for } v\in \Gamma, vw=wv \text{ for } v,w\in \Gamma \textrm{ sharing an edge} \rangle.
\]
Then $W_\Gamma$ is the graph product over $\Gamma$ with $\mathbb{Z}_2$ as each of the vertex groups \cite{greenGraphProductsGroups1990}. For $\Lambda\subseteq \Gamma$ then $W_\Lambda$ is a subgroup of $W_\Gamma$; we call such a subgroup a {\it special subgroup}.

We will write $M_\Gamma := \cL(W_\Gamma)$. If $\Lambda \subseteq \Gamma$ then $M_\Lambda \subseteq M_\Gamma$.  Throughout the paper $\Gamma$ will always denote a fixed graph of which we shall consider various subgraphs and therefore we sometimes write $W$ for $W_\Gamma$. Note that $W_\Gamma$ consists of words with letters in $\Gamma$ which will typically be denoted by boldface letters. For $\uu \in W$ we denote $\vert \uu \vert$ for the length of $\uu$, i.e. the minimal number of letters needed to represent the word. We will say that a letter $a \in \Gamma$ occurs at the start (resp. end) of  $\uu \in W$ if $\vert a \uu \vert < \vert \uu \vert$ (resp. $\vert \uu a \vert < \vert \uu \vert$).

We state the following proposition from \cite[Theorem 2.15]{CaspersFima} that shows that graph products can be decomposed as amalgamated free products. Note that we only apply this theorem to Coxeter groups and therefore the proposition can also be verified directly by checking that $W_{\Gamma} = W_{\Gamma_1} \ast_{W_{\Gamma_1 \cap \Gamma_2}} W_{\Gamma_2}$.

\begin{proposition}[Decomposition as amalgamated free product]\label{prop:graph-product-decomposition}
	Let $\Gamma$ be a finite simple graph. Fix $v\in V$ and set $\Gamma_{1} = \Star(v)$ and $\Gamma_2 = \Gamma \setminus\{v\}$. Then
	$$M_{\Gamma} = M_{\Gamma_{1}} *_{M_{\Gamma_{1}\cap \Gamma_{2}}} M_{\Gamma_{2}}$$
\end{proposition}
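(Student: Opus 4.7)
The plan is to first establish the corresponding group-theoretic amalgamated free product decomposition
$$W_\Gamma \;\cong\; W_{\Gamma_1} \ast_{W_{\Gamma_1 \cap \Gamma_2}} W_{\Gamma_2},$$
and then pass to group von Neumann algebras via the standard fact that $\cL(G_1 \ast_H G_2) \cong \cL(G_1) \ast_{\cL(H)} \cL(G_2)$ as tracial von Neumann algebras. Observe first that $\Gamma_1 \cap \Gamma_2 = \Star(v) \cap (\Gamma \setminus \{v\}) = \Link(v)$, and that $\Gamma_1 \cup \Gamma_2 = \Gamma$.

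To prove the group decomposition I would use the universal properties of both sides. The inclusions $W_{\Gamma_1} \hookrightarrow W_\Gamma$ and $W_{\Gamma_2} \hookrightarrow W_\Gamma$ of special subgroups agree on $W_{\Link(v)}$, and so induce a homomorphism $\phi \colon W_{\Gamma_1} \ast_{W_{\Link(v)}} W_{\Gamma_2} \to W_\Gamma$ which is surjective since $\Gamma = \Gamma_1 \cup \Gamma_2$. For the inverse, define $\psi \colon W_\Gamma \to W_{\Gamma_1} \ast_{W_{\Link(v)}} W_{\Gamma_2}$ on generators by sending each $u \in \Gamma$ to its copy inside the appropriate $W_{\Gamma_i}$ (the choice is unambiguous for $u \in \Gamma_1 \cap \Gamma_2 = \Link(v)$). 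One must verify that the two families of Coxeter relations survive: $u^2 = e$ is immediate as it holds in each $W_{\Gamma_i}$; the commutation relations $uw = wu$ for adjacent $u, w \in \Gamma$ are the key point. If $u, w$ lie together in $\Gamma_1$ or together in $\Gamma_2$ the relation holds in the corresponding factor, so the only potentially missing case is when $\{u, w\}$ crosses the partition, i.e. one of them is $v \in \Gamma_1 \setminus \Gamma_2$ and the other lies in $\Gamma_2 \setminus \Gamma_1 = \Gamma \setminus \Star(v)$. By definition of $\Star(v)$, however, no vertex outside $\Star(v)$ shares an edge with $v$, so this case never arises. Hence $\psi$ is well-defined, and $\phi \circ \psi$ and $\psi \circ \phi$ are identities on generators, giving $W_\Gamma \cong W_{\Gamma_1} \ast_{W_{\Link(v)}} W_{\Gamma_2}$.

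Second, I would pass to the von Neumann algebras. Given $W_\Gamma = W_{\Gamma_1} \ast_{W_{\Link(v)}} W_{\Gamma_2}$, choose coset representatives of $W_{\Link(v)}$ in $W_{\Gamma_1}$ and $W_{\Gamma_2}$ and use the resulting normal-form description of elements of $W_\Gamma$ as alternating reduced products to see that $\ell^2(W_\Gamma)$ decomposes as the amalgamated free product Hilbert $W_{\Link(v)}$-bimodule of $\ell^2(W_{\Gamma_1})$ and $\ell^2(W_{\Gamma_2})$ over $\ell^2(W_{\Link(v)})$. The canonical trace-preserving conditional expectations $\EE_{M_{\Link(v)}} \colon M_{\Gamma_i} \to M_{\Link(v)}$ arise from the orthogonal projections onto $\ell^2(W_{\Link(v)}) \subseteq \ell^2(W_{\Gamma_i})$, and the alternating-centered-word characterization of freeness with amalgamation is immediate from reduced words. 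Hence $M_\Gamma = M_{\Gamma_1} \ast_{M_{\Link(v)}} M_{\Gamma_2} = M_{\Gamma_1} \ast_{M_{\Gamma_1 \cap \Gamma_2}} M_{\Gamma_2}$.

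The only genuinely non-formal step is the verification that no commutation relation crosses the partition $\Gamma = \Gamma_1 \cup \Gamma_2$, which is precisely why the specific choice $\Gamma_1 = \Star(v)$ (rather than an arbitrary cover) is essential; everything else is a direct application of universal properties at the group and von Neumann algebraic levels.
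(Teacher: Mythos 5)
Your proof is correct and follows exactly the route the paper itself indicates: the paper cites the general graph-product decomposition of Caspers--Fima but explicitly remarks that the proposition ``can also be verified directly by checking that $W_{\Gamma} = W_{\Gamma_1} \ast_{W_{\Gamma_1 \cap \Gamma_2}} W_{\Gamma_2}$,'' which is precisely your group-level universal-property argument followed by the standard passage $\cL(G_1 \ast_H G_2) \cong \cL(G_1) \ast_{\cL(H)} \cL(G_2)$. No gaps; your identification of the crossing-edge case as the only nontrivial point is exactly right.
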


\subsection{Word combinatorics} We collect some lemmas that give control over translations and conjugates of Coxeter subgroups.

\begin{lemma}[Proposition 3.4 of \cite{antolinTitsAlternativesGraph2015}] \label{lemma:intersection-conjugated-group}
    Let   $\Gamma_1\subseteq \Gamma$ be a subgraph and let $\gg\in W_{\Gamma}$.   Then there exists $\Gamma_2 \subseteq \Gamma_1$ and $\hh \in W_{\Gamma_1}$ such that
    \[
       W_{\Gamma_1}\cap \gg W_{\Gamma_1}\gg^{-1} = \hh W_{\Gamma_2}\hh^{-1}
    \]
\end{lemma}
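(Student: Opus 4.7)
The plan is to induct on the word length $|\gg|$ of $\gg$ in $W_\Gamma$; the base case $|\gg|=0$ is trivial with $\hh=e$ and $\Gamma_2=\Gamma_1$. For the inductive step I would first reduce to the case where $\gg$ is \emph{left-reduced modulo} $W_{\Gamma_1}$, i.e.\ no letter of $\Gamma_1$ occurs at the start of $\gg$. Indeed, if some $v\in\Gamma_1$ does occur at the start, write $\gg=v\gg'$ with $|\gg'|<|\gg|$; since $v\in W_{\Gamma_1}$,
\[
W_{\Gamma_1}\cap \gg W_{\Gamma_1}\gg^{-1}\;=\;v\bigl(W_{\Gamma_1}\cap \gg' W_{\Gamma_1}(\gg')^{-1}\bigr)v^{-1},
\]
so the inductive hypothesis for $\gg'$ yields a pair $(\Gamma_2',\hh')$, and then $(\Gamma_2',\,v\hh')$ with $v\hh'\in W_{\Gamma_1}$ works for $\gg$.

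For $\gg$ left-reduced modulo $W_{\Gamma_1}$, I would claim that $\hh=e$ suffices with
\[
\Gamma_2\;:=\;\{u\in\Gamma_1:\ u\text{ shares an edge with every letter appearing in a normal form of }\gg\},
\]
so that $W_{\Gamma_1}\cap\gg W_{\Gamma_1}\gg^{-1}=W_{\Gamma_2}$. The inclusion $\supseteq$ is immediate, since any $u\in\Gamma_2$ commutes with $\gg$ and hence $\gg u\gg^{-1}=u$. For the reverse inclusion, let $\ww\in W_{\Gamma_1}$ with $\gg^{-1}\ww\gg\in W_{\Gamma_1}$, pick any letter $a$ at the start of $\gg$ (necessarily $a\notin\Gamma_1$ by left-reducedness), and write $\gg=a\gg''$. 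Then $(\gg'')^{-1}(a\ww a)\gg''\in W_{\Gamma_1}$. Since $\ww\in W_{\Gamma_1}$ has no $a$ in its normal form, the graph-product normal form of $a\ww a$ has length either $|\ww|$ (when every letter of $\ww$ commutes with $a$, so $a\ww a=\ww$) or $|\ww|+2$ with both $a$'s ineradicable by shuffling. Ruling out the second case gives $\ww\in W_{\Gamma_1\cap\Link(a)}$; iterating the argument letter-by-letter over the starting letters of $\gg$ (combined with an inner induction on $|\gg''|$ applied with the smaller ambient special subgroup $\Gamma_1\cap\Link(a)$) forces $\ww\in W_{\Gamma_2}$.

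The main obstacle is the normal-form bookkeeping needed to rule out the $|\ww|+2$ possibility above: one must carefully track how the two surrounding copies of $a$ could in principle cancel against letters of $\gg''$ and $(\gg'')^{-1}$ under the graph-product shuffling relations, and show this is impossible precisely because $\gg$ was chosen left-reduced modulo $W_{\Gamma_1}$. A cleaner conceptual route is to invoke \Cref{prop:graph-product-decomposition} with the vertex $a$, giving $W_\Gamma=W_{\Star(a)}*_{W_{\Link(a)}}W_{\Gamma\setminus\{a\}}$, and then to apply Bass--Serre theory to the associated tree: $W_{\Gamma_1}\subseteq W_{\Gamma\setminus\{a\}}$ fixes a vertex, $\gg W_{\Gamma_1}\gg^{-1}$ fixes its $\gg$-translate, and the intersection sits inside the pointwise stabilizer of the geodesic between them. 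Analysing this stabilizer (an intersection of conjugated vertex and edge subgroups of the amalgam) yields the desired conjugate of a special subgroup directly, bypassing the direct combinatorics of normal forms.
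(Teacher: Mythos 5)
The paper itself does not prove this lemma --- it is quoted from Antol\'in--Minasyan --- so I will assess your argument on its own terms. There is a genuine error in the key step: after normalizing $\gg$ only on the \emph{left}, the identity $W_{\Gamma_1}\cap\gg W_{\Gamma_1}\gg^{-1}=W_{\Gamma_2}$ with $\Gamma_2=\Gamma_1\cap\Link(\supp\gg)$ is false, because $\supp\gg$ is sensitive to trailing letters of $\gg$ that lie in $\Gamma_1$, and these can make your $\Gamma_2$ strictly too small. Concretely, take $\Gamma=\{a,b,c\}$ with the single edge $\{a,c\}$, let $\Gamma_1=\{b,c\}$ and $\gg=ab$. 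The only letter occurring at the start of $\gg$ is $a\notin\Gamma_1$, so $\gg$ is left-reduced in your sense, and your $\Gamma_2=\Gamma_1\cap\Link(a)\cap\Link(b)=\emptyset$. Yet $\gg^{-1}c\,\gg=b(aca)b=bcb\in W_{\Gamma_1}$, so $c\in W_{\Gamma_1}\cap\gg W_{\Gamma_1}\gg^{-1}$, which is therefore not trivial. Tracing this through your iteration: the first step correctly lands $\ww=c$ in $W_{\Gamma_1\cap\Link(a)}=W_{\{c\}}$, but the subsequent ``inner induction'' cannot push it any further down, so the claimed conclusion $\ww\in W_{\Gamma_2}$ simply does not follow. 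The missing reduction is on the right: since $\gg W_{\Gamma_1}\gg^{-1}$ depends only on the coset $\gg W_{\Gamma_1}$, you may (and must) also strip letters of $\Gamma_1$ from the end of $\gg$; once $\gg$ has no $\Gamma_1$-letter at either end, the identity $W_{\Gamma_1}\cap\gg W_{\Gamma_1}\gg^{-1}=W_{\Gamma_1\cap\Link(\supp\gg)}$ is correct and is exactly what \cref{Lem=Combinatorics} delivers (apply it with $\Gamma_2=\Gamma_1$ and $\uu=\uu'=\gg$: it gives $\ww\in W_{\Gamma_1}$ with every letter of $\gg$ commuting with $\ww$).

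Two further caveats. First, even in the corrected setting you explicitly defer the combinatorial core (``ruling out the $|\ww|+2$ possibility'' and the letter-by-letter iteration) as an acknowledged obstacle; that is precisely the content one has to prove, and it is supplied by \cref{Lem=Combinatorics}, so your write-up as it stands does not contain a complete argument. Second, the Bass--Serre alternative is an attractive conceptual route, but ``analysing this stabilizer'' is where all the work lives: the pointwise stabilizer of the geodesic is an intersection of conjugates of $W_{\Link(a)}$-type edge groups, and showing that its intersection with $W_{\Gamma_1}$ is conjugate to a special subgroup \emph{by an element of $W_{\Gamma_1}$} (as the statement requires) again needs an induction of essentially the same combinatorial flavour. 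Your outer induction, stripping leading $\Gamma_1$-letters and conjugating $\hh'$ to $v\hh'\in W_{\Gamma_1}$, is fine and does preserve that requirement.
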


\begin{lemma}\label{Lem=Combinatorics}
Let $\Gamma_1, \Gamma_2$ be subgraphs of $\Gamma$.
Let $\ww \in W_{\Gamma_1}, \uu,\uu' \in W$ be such that (1) $\uu$ and $\uu'$ do not have a letter in $\Gamma_1$ at the start, (2) $\uu$ and $\uu'$ do not have a letter in $\Gamma_2$ at the end, (3)  $\uu^{-1} \ww \uu' \in W_{\Gamma_2}$. Then,  $\ww \in W_{\Gamma_1 \cap \Gamma_2}$,  $\uu = \uu'$ and $\uu$ (and thus $\uu'$)  commutes with $\ww$.
\end{lemma}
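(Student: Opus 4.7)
The plan is to first reduce to the symmetric case $\uu = \uu'$ via a double-coset argument, and then induct on $|\ww|$. The key preliminary observation is that under hypotheses~(1) and~(2), each of the products $\ww \uu'$ and $\uu \vv$ (where $\vv := \uu^{-1}\ww\uu'$) is reduced in the sense $|xy|=|x|+|y|$: the ending letters of $\ww$ lie in $\Gamma_1$ while $\uu'$ has no starting letter in $\Gamma_1$, so $|\ww\uu'|=|\ww|+|\uu'|$, and dually the starting letters of $\vv \in W_{\Gamma_2}$ lie in $\Gamma_2$ while $\uu$ has no ending letter in $\Gamma_2$, so $|\uu\vv|=|\uu|+|\vv|$. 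From $\uu'=\ww^{-1}\uu\vv$ it follows that $\uu$ and $\uu'$ represent the same $(W_{\Gamma_1},W_{\Gamma_2})$-double coset, and both satisfy the descent conditions characterising its unique minimal-length representative (no starting letter in $\Gamma_1$, no ending letter in $\Gamma_2$), so $\uu=\uu'$. This reduces the lemma to showing that if $\ww\uu=\uu\vv$ with $\ww\in W_{\Gamma_1}$, $\vv\in W_{\Gamma_2}$, and $\uu$ satisfying~(1),~(2), then $\ww\in W_{\Gamma_1\cap\Gamma_2}$ and $\ww$ commutes with $\uu$.

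I would prove this residual claim by induction on $|\ww|$, the case $\ww=e$ being immediate. For the inductive step, choose a reduced factorisation $\ww = w_1 \ww'$ with $w_1\in\Gamma_1$. Since $|w_1\ww|<|\ww|$, the equation $\ww\uu=\uu\vv$ gives $|w_1\uu\vv|<|\uu\vv|$, so $w_1$ is a starting letter of the reduced word $\uu\vv$. The standard combinatorics of reduced concatenations in a right-angled Coxeter group (the trace-monoid picture) then forces a dichotomy: either $w_1$ is already a starting letter of $\uu$, or $w_1$ commutes with every letter of $\uu$ and is a starting letter of $\vv$. The first case is ruled out by~(1), since $w_1\in\Gamma_1$; hence we are in the second, giving $w_1\in\Gamma_2$ (so $w_1\in\Gamma_1\cap\Gamma_2$), $w_1$ commuting with $\uu$, and $\vv=w_1\vv''$ reduced. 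Left-multiplying $\ww\uu=\uu\vv$ by $w_1$ then collapses the equation to $\ww'\uu=\uu\vv''$ with $\vv''\in W_{\Gamma_2}$ and $|\ww'|<|\ww|$; the inductive hypothesis yields $\ww'\in W_{\Gamma_1\cap\Gamma_2}$ and $[\ww',\uu]=e$, and reassembling $\ww=w_1\ww'$ completes the induction.

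The only non-routine ingredient is the dichotomy describing the starting letters of a reduced product $\uu\vv$ in a right-angled Coxeter group; the same fact underlies the double-coset minimal-representative statement used in the reduction step. Both are standard graph-product combinatorics, but they are the crucial inputs making the induction run, so I expect the main obstacle to lie in correctly invoking (or briefly reproving) this combinatorial fact, rather than in any deeper argument.
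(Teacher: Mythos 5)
Your proof is correct, but it is organised quite differently from the paper's. The paper first shows $\ww\in W_{\Gamma_1\cap\Gamma_2}$ by a direct contradiction argument: it picks a letter $b$ of $\ww$ lying in $\Gamma_1\setminus\Gamma_2$, arranges a reduced expression $\ww=\ww_1 b\ww_2$, and tracks that occurrence of $b$ through $\uu^{-1}\ww\uu'$ to see it cannot be cancelled (using that $\uu,\uu'$ have no $\Gamma_1$-letters at the start), contradicting $\uu^{-1}\ww\uu'\in W_{\Gamma_2}$. It then proves $\uu=\uu'$ and the commutation together by writing $\uu=\vv\uu_1$, $\uu'=\vv\uu_1'$ with $\vv$ a maximal common prefix commuting with $\ww$ and deriving contradictions from maximality. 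You instead prove $\uu=\uu'$ \emph{first}, by observing that $\uu$ and $\uu'$ lie in the same $(W_{\Gamma_1},W_{\Gamma_2})$-double coset and both satisfy the descent conditions picking out its unique minimal-length representative; then you obtain $\ww\in W_{\Gamma_1\cap\Gamma_2}$ and $[\ww,\uu]=e$ simultaneously by induction on $|\ww|$, peeling off one starting letter $w_1$ of $\ww$ at a time via the dichotomy for starting letters of a reduced concatenation $\uu\vv$. Both external inputs you invoke are genuinely standard and correctly applied: the additive decomposition $x=u\,d\,v$ over a minimal double-coset representative $d$ shows that any element with no left descent in $\Gamma_1$ and no right descent in $\Gamma_2$ equals $d$, and the starting-letter dichotomy is exactly the heap/trace-monoid description of minimal letters of a reduced product. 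Your route is more modular and arguably more transparent (it exhibits $\ww$ as flowing through $\uu$ one commuting letter at a time, which is precisely the content of the conclusion), at the cost of importing general Coxeter-group facts; the paper's argument is longer and more ad hoc but entirely self-contained, which matters here since the lemma sits in the preliminaries and is meant to be verifiable by bare-hands word combinatorics. If you keep your version, you should either cite the parabolic double-coset theorem explicitly or give the short graph-product proof of it, since it is the one step a reader cannot check by inspection.
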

\begin{proof}
Suppose that $\ww$ contains a letter $b$ in $\Gamma_1$ which is not contained in $\Gamma_2$, say that we write $\ww = \ww_1 b \ww_2$ as a reduced expression. We may assume that $\ww_1$ does not end on any letters commuting with $b$ by moving those letters into $\ww_2$.  Then as $\uu'$ does not have letters from $\Gamma_1$ at the start we see that $\ww \uu'$ contains the letter $b$; more precisely we may write a reduced expression $\ww \uu' = \ww_1 b \ww_3 \uu''$ where $\ww_3$ is a start of $\ww_2$ and $\uu''$ is a tail of $\uu'$. Since   $\uu^{-1} \ww \uu'$  is contained in $W_{\Gamma_2}$ the letter $b$ cannot occur anymore in its reduced expression. We have $\uu^{-1} \ww \uu' = \uu^{-1} \ww_1 b \ww_3 \uu''$ (possibly non-reduced). Now if a letter at the end of $\uu^{-1}$ deletes the letter $b$ then this would mean that $\uu$ has a letter in $\Gamma_1$ up front (either $b$ itself or a letter from $\ww_1$) which is not possible. We conclude that $\ww \in W_{\Gamma_1 \cap \Gamma_2}$.

Write $\uu = \vv\uu_1$ and $\uu' = \vv\uu_1'$ (both reduced) where $\uu_1,\uu_2\in W$ and where $\vv\in W$ such that $\vv$ commutes with $\ww$. Moreover we can assume that $\uu_1,\uu_2,\vv$ are chosen such that $|\vv|$ is  maximal over all possible choices. Now, suppose that $\uu_1'\not=e$.
Let $d$ be a letter at the end of $\uu_1'$. Then $d\not\in \Gamma_2$ by assumption on $\uu'$ (as $d$ is also at the end of $\uu'$). Now $\uu_1^{-1}\ww\uu_1' = \uu^{-1}\ww\uu'\in W_{\Gamma_2}$, which implies that $d$ is deleted, i.e. $\uu_1^{-1}\ww\uu_1'$ is not reduced. Thus a letter $c$ at the start of $\uu_1$ must delete a letter at the end of $\uu_1^{-1}\ww$. If $c$ deletes a letter from $\ww$ then in particular $c\in \Gamma_1\cap\Gamma_2$ (as $\ww\in W_{\Gamma_1\cap \Gamma_2}$). However, as $\uu'$ does not start with letters from $\Gamma_1$ this implies that $|\vv| \geq 1$. Now, every letter of $\vv$ commutes with the letters from $\ww$ (by assumption on $\vv$). However, not every letter of $\vv$ commutes with $c$, since $c$ is not at the start of $\uu'$. From this we conclude that $c$ is not a letter of $\ww$, a contradiction. We conclude that $c$ is not deleted by a letter from $\ww$, and thus that $c$ must commute with $\ww$, and that $c$ deletes a letter at the end of $\uu_1^{-1}$ i.e. a letter at the start of $\uu_1$. Hence, we can write $\uu_1 = c\uu_2$ and $\uu_1' = c\uu_2'$ (both reduced) for some $\uu_2,\uu_2'\in W$. But then $\uu = \vv c\uu_2$ and $\uu' = \vv c\uu_2'$ and we have that $\vv c$ commutes with $\ww$. This contradicts the maximality of $|\vv|$. We conclude that $\uu_1'=e$. Now as $\uu_1^{-1}\ww = \uu_1^{-1}\ww\uu_1'=\uu^{-1}\ww\uu'$ lies in $W_{\Gamma_2}$ by assumption and as $\ww\in W_{\Gamma_1\cap \Gamma_2}$ we obtain that $\uu_1^{-1}\in W_{\Gamma_2}$. But $\uu_1$ does not end with a letter from $\Gamma_2$ by assumption on $\uu$ (since letters at the end of $\uu_1$ are also at the end of $\uu$). This implies that $\uu_1=e$. This shows $\uu =\vv = \uu'$ and that $\uu$ ($=\vv$) commutes with $\ww$.

\end{proof}

\section{Embeddings of quasi-normalizers in graph products}\label{Sect=QuasiNormalizer}

The main aim of this section is to prove \eqref{Item=Embed2} of Proposition \ref{Prop=LocateNormalizerNew} below. This gives us control over embeddings of normalizers and quasi-normalizers in graph product. The proof is essentially a combination of arguments contained  in \cite{VaesAnnENS} and \cite{IoanaAnnENS} with a number of  modifications that are particular to  graph products.

The following lemma, for $n=1$, is stated and proved in \cite[Lemma 1.4.5]{IoanaPetersonPopaActa} and occurs in many different forms in the literature (see for instance  \cite[Theorem 3.2.2]{VaesAnnENS}).   For completeness we give the proof for arbitrary $n$ here.

\begin{lemma}\label{Lem=TransitiveEmbedding}
Let $A, B_1, \ldots, B_n, Q \subseteq M$ be von Neumann subalgebras with $B_i \subseteq Q$.  Assume that $A \prec_M Q$ but $A \not \prec_M B_i$ for any $i =1, \ldots, n$. Then there exist projections $p \in A, q \in Q$, a partial isometry $v \in q M p$ and a normal $\ast$-homomorphism $\theta: pAp \rightarrow q Q q$ such that $\theta(x) v = vx, x \in pAp$ and such that $\theta(pAp) \not \prec_Q B_i$ for any $i =1, \ldots, n$.
\end{lemma}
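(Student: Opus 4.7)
The plan is to prove the following stronger statement: every embedding datum $(p,q,v,\theta)$ witnessing $A\prec_M Q$ automatically satisfies $\theta(pAp)\not\prec_Q B_i$ for each $i=1,\ldots,n$. I argue this by contrapositive: if some $(p,q,v,\theta)$ witnessing $A\prec_M Q$ satisfies $\theta(pAp)\prec_Q B_i$ for some $i$, then I construct an embedding datum witnessing $A\prec_M B_i$, contradicting the hypothesis.

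Fix $(p,q,v,\theta)$ and, using Remark~\ref{Rmk=Support}, assume $q = \mathrm{supp}(\mathbb{E}_Q(vv^*))$. Suppose $\theta(pAp)\prec_Q B_i$ is witnessed by a projection $p_1\in \theta(pAp)$, a projection $q_1\in B_i$, a normal $*$-homomorphism $\theta_1\colon p_1\theta(pAp)p_1\to q_1 B_i q_1$, and a non-zero partial isometry $v_1\in q_1 Q p_1$ with $\theta_1(y)v_1=v_1 y$. Because $\theta(pAp)$ is a von Neumann subalgebra of $qQq$ and $\ker\theta$ is a $\sigma$-weakly closed two-sided ideal of $pAp$, projections lift: I choose a projection $\tilde p_1\in pAp$ with $\theta(\tilde p_1)=p_1$. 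Then $\theta$ maps $\tilde p_1 A\tilde p_1$ into $p_1\theta(pAp)p_1$, and $\tilde\theta := \theta_1\circ \theta|_{\tilde p_1 A\tilde p_1}$ is a normal $*$-homomorphism $\tilde p_1 A \tilde p_1 \to q_1 B_i q_1$.

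Define $\tilde v := v_1 v$; since $v\tilde p_1 = \theta(\tilde p_1)v = p_1 v$ and $v_1 p_1 = v_1$, one has $\tilde v \in q_1 M\tilde p_1$. For $x\in \tilde p_1 A\tilde p_1$, combining the intertwining relations for $v$ and $v_1$ with $\theta(x)\in p_1\theta(pAp)p_1$ gives $\tilde v x = v_1\theta(x) v = \theta_1(\theta(x)) v_1 v = \tilde\theta(x)\tilde v$. Nonvanishing of $\tilde v$ follows from $\|\tilde v\|_2^2 = \tau(v_1^* v_1 \cdot \mathbb{E}_Q(vv^*)) > 0$: the element $v_1^* v_1$ is a non-zero projection in $Q$ with $v_1^*v_1\leq p_1\leq q$, and $\mathbb{E}_Q(vv^*)$ has full support $q$. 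Finally, $\tilde v$ need not be a partial isometry, so I take its polar decomposition $\tilde v=u|\tilde v|$ with $u\in q_1 M\tilde p_1$ a non-zero partial isometry. The identity $\tilde v x = \tilde\theta(x)\tilde v$ together with its adjoint shows $\tilde v^*\tilde v\in (\tilde p_1 A\tilde p_1)'$, so $|\tilde v|$ and its support projection commute with $\tilde p_1 A\tilde p_1$, and a short spectral-calculus manipulation upgrades the identity to $ux = \tilde\theta(x)u$ for $x\in \tilde p_1 A\tilde p_1$. This datum exhibits $A\prec_M B_i$, the desired contradiction.

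The main technical obstacle I expect is the verification that $\tilde v$ is non-zero: composing partial isometries into $Q$ and into $B_i\subseteq Q$ could a priori collapse to zero, and it is precisely the support choice permitted by Remark~\ref{Rmk=Support} that prevents this collapse by forcing $v_1^* v_1$ to pair positively with $\mathbb{E}_Q(vv^*)$. The polar-decomposition step is a standard technical move to convert a general intertwining element into a partial isometry, made routine here by the fact that $\tilde v^*\tilde v$ automatically lies in the relative commutant of the source algebra.
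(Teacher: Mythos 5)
Your proposal is correct and follows essentially the same route as the paper's proof: compose the two intertwining data, lift the projection $p_1$ through the normal $\ast$-homomorphism $\theta$, use the normalization $q=\mathrm{supp}(\mathbb{E}_Q(vv^*))$ to rule out collapse of the product $v_1v$, and finish with a polar decomposition. The only (inessential) difference is that you verify nonvanishing by computing $\|v_1v\|_2^2=\tau\bigl(v_1^*v_1\,\mathbb{E}_Q(vv^*)\bigr)>0$, whereas the paper argues that $v_1v=0$ would force $v_1\mathbb{E}_Q(vv^*)=0$ and hence $v_1=0$.
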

\begin{proof}
As $A \prec_M Q$ we may take  $p \in A, q \in Q$, a partial isometry $v \in q M p$ and a normal $\ast$-homomorphism $\theta: pAp \rightarrow q Q q$ such that $\theta(x) v = v x, x \in pAp$. Without loss of generality we may assume that $q$ equals the support projection of $\mathbb{E}_{Q}(v v^\ast)$, see Remark \ref{Rmk=Support}. Now assume that $Q_1 := \theta(pAp) \prec_Q B$ for some $B := B_i$. Then there exist projections $q_1 \in Q_1, r \in B$, a partial isometry $w \in r Q q_1$  and a normal $\ast$-homomorphism $\varphi: q_1 Q q_1 \rightarrow r B r$ such that $\varphi(x) w = wx$ for all $x \in q_1 Q q_1$. Using \cite[I.4.4, Corollary]{DixmierBook}, it follows that we can obtain a projection $p_1 \in pAp$ such that $\theta(p_1) = q_1$. Then the composition $\varphi \circ \theta$ yields a $\ast$-homomorphism $p_1 A p_1 \rightarrow rBr$ and we have $\varphi(\theta(x)) wv = w \theta(x) v = wv x$. Let $wv = u \vert wv \vert$ be the polar decomposition of $wv$ and let $\vert wv \vert^{-1}$ be the operator that acts as $\vert wv \vert^{-1}$ on $\ker(wv)^{\perp}$ and which has $\ker(wv)$ as its kernel.
Then   $\varphi(\theta(x)) u =\varphi(\theta(x)) wv \vert wv \vert^{-1} =  wv x \vert wv \vert^{-1} = wv   \vert wv \vert^{-1} x = u x$ for all $x \in p_1 A p_1$ as $\vert wv \vert \in p_1Ap_1'$.   Finally we claim that $wv$, and therefore $u$, is non-zero. Indeed, if $wv$ would be 0 then   $w \mathbb{E}_Q(vv^\ast) =  \mathbb{E}_Q(wvv^\ast) = 0$ which implies that $w$ is zero as we assumed that the support of $\mathbb{E}_Q(vv^\ast)$ equals $q$, contradiction.  This concludes the proof.
\end{proof}

To proceed we recall the following lemma giving control over quasi-normalizers.

\begin{lemma}[Lemma 2.7 of \cite{chifanAmalgamatedFreeProduct2017} and \cite{PopaInventionesI}]
\label{lemma:embeddability-groups}
Let $G_1\subseteq G$ be countable groups and let $P\subseteq \calL(G_1)$. Assume that
	$P\not\prec_{\calL(G_1)} \calL(G_1\cap g G_1g^{-1})$ for all $g\in G\setminus G_1$. Then if $x\in \calL(G)$ satisfies $xP = \sum_{i=1}^n \calL(G_1)x_i$ for some $x_1,\ldots, x_n\in \calL(G)$ then we obtain $x \mathbf{1}_P\in \calL(G_1)$.
\end{lemma}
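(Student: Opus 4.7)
The plan is to argue by contradiction: assume $y := x\mathbf{1}_P \notin \calL(G_1)$ and derive $P \prec_{\calL(G_1)} \calL(G_1\cap g G_1 g^{-1})$ for some $g\in G\setminus G_1$, contradicting the hypothesis. The first step is to decompose $L^2(\calL(G))$ as an $\calL(G_1)$-bimodule along double cosets. For each double coset $C \in G_1\backslash G/G_1$ set $\calH_C := \overline{\operatorname{span}}^{L^2}\{u_h : h\in C\}$, yielding
\[
L^2(\calL(G)) \;=\; \bigoplus_{C \in G_1\backslash G/G_1} \calH_C.
\]
The key structural identification is that, for $C = G_1 g G_1$, the bimodule $\calH_C$ is unitarily isomorphic as an $\calL(G_1)$-bimodule to $L^2(\calL(G_1)) \otimes_{\calL(G_1\cap gG_1g^{-1})} L^2(\calL(G_1))$, via $u_{h_1 g h_2} \mapsto u_{h_1} \otimes u_{g h_2 g^{-1}} u_g$ (or a suitable variant), and this in turn coincides with $L^2(\langle \calL(G_1), e_{\calL(G_1\cap gG_1g^{-1})}\rangle)$ as $\calL(G_1)$-bimodules.

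Write $y = \sum_C y_C$ with $y_C \in \calH_C$. Since $y \notin \calH_{G_1} = L^2(\calL(G_1))$, there must exist a double coset $C_0 = G_1 g_0 G_1$ with $g_0 \in G\setminus G_1$ such that $y_{C_0} \neq 0$. Decomposing each $x_i = \sum_C (x_i)_C$ similarly and projecting the inclusion $xP = yP \subseteq \sum_{i=1}^n \calL(G_1) x_i$ onto $\calH_{C_0}$ (which is a left $\calL(G_1)$-submodule, so the projection commutes with left multiplication by $\calL(G_1)$) yields
\[
y_{C_0}\,P \;\subseteq\; \sum_{i=1}^n \calL(G_1)\,(x_i)_{C_0}.
\]
Thus $y_{C_0}$ is a non-zero vector in $\calH_{C_0}$ whose right $P$-orbit is contained in a finitely generated left $\calL(G_1)$-module.

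The main obstacle, and really the heart of the argument, is translating this finite-generation statement into the honest intertwining $P \prec_{\calL(G_1)} B_0$ with $B_0 := \calL(G_1\cap g_0 G_1 g_0^{-1})$. This is Popa's bimodule criterion for intertwining (cf.\ \cite{PopaInventionesI}, \cite[Thm.\ 3.2.2]{VaesAnnENS}): under the identification $\calH_{C_0} \cong L^2(\langle \calL(G_1), e_{B_0}\rangle)$, a non-zero left-$\calL(G_1)$-finite vector whose right-$P$-orbit stays inside a finitely generated left $\calL(G_1)$-submodule yields, via the operator-valued weight $\Tr_{B_0}$, a non-zero positive element $T \in P' \cap \langle \calL(G_1), e_{B_0}\rangle$ with $\Tr(T) < \infty$; spectral-projection truncation of $y_{C_0}$ and the $(x_i)_{C_0}$ is needed to pass from $L^2$-vectors to bounded operators in the basic construction, but this is standard. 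The existence of such $T$ forces $P \prec_{\calL(G_1)} B_0 = \calL(G_1\cap g_0 G_1 g_0^{-1})$ with $g_0 \notin G_1$, contradicting the hypothesis and completing the proof.
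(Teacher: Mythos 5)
The paper itself contains no proof of this lemma: it is quoted from \cite[Lemma 2.7]{chifanAmalgamatedFreeProduct2017}, which in turn follows Popa's original argument in \cite{PopaInventionesI}. Your reconstruction is precisely that standard proof and is correct in outline: the orthogonal decomposition of $L^2(\calL(G))$ over double cosets, the identification of $\calH_{G_1gG_1}$ with the basic-construction bimodule over $\calL(G_1\cap gG_1g^{-1})$, the projection of the relation $xP\subseteq\sum_i\calL(G_1)x_i$ onto a single coset space (legitimate, since $P_{C}$ commutes with both the left and the right $\calL(G_1)$-actions on $L^2(\calL(G))$), and the appeal to Popa's criterion in the form ``a nonzero vector whose right $P$-orbit lies in a finitely generated left $\calL(G_1)$-module yields a finite-trace positive element of $P'\cap\langle \calL(G_1),e_{B_0}\rangle$.'' Two points deserve more care than your sketch gives them. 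First, the displayed intertwiner $u_{h_1gh_2}\mapsto u_{h_1}\otimes u_{gh_2g^{-1}}u_g$ is not well defined as written ($gh_2g^{-1}$ need not lie in $G_1$); the correct identification carries a twist by $\mathrm{Ad}(g)$ and, depending on which leg one twists, produces the subgroup $G_1\cap g^{-1}G_1g$ rather than $G_1\cap gG_1g^{-1}$ --- harmless here because the hypothesis quantifies over all $g\in G\setminus G_1$ and that set is stable under $g\mapsto g^{-1}$. Second, the final passage from the finite-generation statement to $P\prec_{\calL(G_1)}B_0$ is asserted rather than executed; it is, however, exactly one of the equivalent formulations of intertwining in \cite[Theorem 3.2.2]{VaesAnnENS} and \cite{PopaInventionesI}, so deferring to it is no less rigorous than the paper's own decision to cite the lemma without proof.
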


The following proposition concerns a result that is well-known for amalgamated free products, see \cite[Theorem 1.2.1]{IoanaPetersonPopaActa}, of which we obtain the analogue for graph products.   Our result also generalizes some recent results in the literature \cite[Lemma 2.9]{CDD1}.

    The second statement of the following proposition should also be compared to \cite[Lemma 9.4]{IoanaAnnENS}.  In the present paper the inclusion $M_\Lambda \subseteq M_\Gamma$ is usually not mixing, but for graph products we still have enough control over the (quasi-)normalizers of subalgebras.

\begin{proposition}\label{Prop=LocateNormalizerNew}
  Let $\Lambda$ be a subgraph of $\Gamma$ and set $M = M_\Gamma$. Let $A \subset M$ be a von Neumann subalgebra and let $P =  \Nor_{M}(A)''$. Let $r \in P \cap P'$ be a projection. The following statements hold true.
  \begin{enumerate}
      \item \label{Item=Embed1} If $A \subseteq M_{\Lambda}$ and $A \not \prec_{M_{\Lambda}}  M_{\widetilde{\Lambda}}$ for all strict subgraphs $\widetilde{\Lambda} \subsetneq \Lambda$ then $\qNor_{M}(A)  \mathbf{1}_A \subset M_{\Lambda\cup \Link(\Lambda)}$.
      \item \label{Item=Embed2} If $r A \prec_{M} M_{\Lambda}$ and  $rA \not\prec_{M}  M_{\widetilde{\Lambda}}$ for all strict subgraphs $\widetilde{\Lambda}\subsetneq \Lambda$ then $r P \prec_{M} M_{\Lambda \cup \Link(\Lambda)}$.
  \end{enumerate}
 \end{proposition}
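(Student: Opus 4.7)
The overall strategy combines the group-theoretic quasi-normaliser control of Lemma \ref{lemma:embeddability-groups} with the word combinatorics of graph products (Lemmas \ref{lemma:intersection-conjugated-group} and \ref{Lem=Combinatorics}), and, for \eqref{Item=Embed2}, with a transport of intertwiners in the style of \cite{IoanaAnnENS,VaesAnnENS} using Lemma \ref{Lem=TransitiveEmbedding}. Part \eqref{Item=Embed1} is the algebraic--combinatorial core; part \eqref{Item=Embed2} bootstraps from it.

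For \eqref{Item=Embed1}, I plan to apply Lemma \ref{lemma:embeddability-groups} with $G = W_\Gamma$, $G_1 = W_{\Lambda \cup \Link(\Lambda)}$ and $P = A$ to an arbitrary $x \in \qNor_M(A)$; the defining relations $Ax \subseteq \sum x_i A$ and $Ax^* \subseteq \sum y_j A$ yield, after passing to adjoints, the left-$\calL(G_1)$-module hypothesis of that lemma. The main point to verify is that $A \not\prec_{M_{\Lambda \cup \Link(\Lambda)}} \calL(W_{\Lambda \cup \Link(\Lambda)} \cap \gg W_{\Lambda \cup \Link(\Lambda)} \gg^{-1})$ for every $\gg \notin W_{\Lambda \cup \Link(\Lambda)}$. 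By Lemma \ref{lemma:intersection-conjugated-group} this intersection equals $\hh W_{\Lambda'} \hh^{-1}$ with $\Lambda' \subsetneq \Lambda \cup \Link(\Lambda)$ and $\hh \in W_{\Lambda \cup \Link(\Lambda)}$, and conjugation by the unitary $\hh \in M_{\Lambda \cup \Link(\Lambda)}$ is immaterial for intertwining within that algebra. The combinatorial heart is to use Lemma \ref{Lem=Combinatorics} applied to a suitable normal form of $\gg$ to force $\Lambda' \cap \Lambda \subsetneq \Lambda$ strictly (otherwise $\gg$ itself would lie in $W_{\Lambda \cup \Link(\Lambda)}$). Since $\Lambda \cap \Link(\Lambda) = \emptyset$ and $\Link(\Lambda)$ commutes pointwise with $\Lambda$, we have a tensor splitting $M_{\Lambda \cup \Link(\Lambda)} = M_\Lambda \bar\otimes M_{\Link(\Lambda)}$; the standard tensor-reduction principle for intertwining then converts a hypothetical $A \prec_{M_{\Lambda \cup \Link(\Lambda)}} M_{\Lambda'}$ into $A \prec_{M_\Lambda} M_{\Lambda' \cap \Lambda}$, contradicting the hypothesis. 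Lemma \ref{lemma:embeddability-groups} accordingly gives $x \mathbf{1}_A \in M_{\Lambda \cup \Link(\Lambda)}$.

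For \eqref{Item=Embed2}, I first invoke Lemma \ref{Lem=TransitiveEmbedding} with the finite family $\{M_{\widetilde\Lambda} : \widetilde\Lambda \subsetneq \Lambda\}$ (finite because $\Gamma$ is finite) to produce projections $p \in rA$, $q \in M_\Lambda$, a partial isometry $v \in qMp$ and a normal $\ast$-homomorphism $\theta \colon p(rA)p \to q M_\Lambda q$ with $\theta(x) v = vx$ such that $B := \theta(p(rA)p) \not\prec_{M_\Lambda} M_{\widetilde\Lambda}$ for every strict subgraph. Part \eqref{Item=Embed1} applied to $B \subseteq M_\Lambda$ yields $\qNor_M(B)'' \mathbf{1}_B \subseteq M_{\Lambda \cup \Link(\Lambda)}$. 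To transfer this to $rP$, note that since $r \in P \cap P'$ every $u \in \Nor_M(A)$ commutes with $r$; using $\theta(x) v = v x$ one computes $\theta(x)(vuv^*) = v u (u^* x u) v^*$ for $x \in p(rA)p$, and a Popa--Ioana--Vaes-style transport argument (using the relative irreducibility of $B$ to control the finitely many corners $u^* p u$ that effectively contribute) shows that $v u v^* \in \qNor_M(B)$. This gives $rP \prec_M \qNor_M(B)'' \subseteq M_{\Lambda \cup \Link(\Lambda)}$.

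The principal obstacle is the last transport step in \eqref{Item=Embed2}: because $u$ need not commute with $p$, the element $u^* x u$ sits in the corner $(u^* p u) (rA) (u^* p u)$ rather than in $p(rA)p$, so $vuv^*$ is not literally a one-sided quasi-normaliser of $B$. Converting the formal identity above into actual membership in $\qNor_M(B)$ requires the sharp non-intertwining $B \not\prec_{M_\Lambda} M_{\widetilde\Lambda}$ (guaranteed by Lemma \ref{Lem=TransitiveEmbedding}) to bound the effective corners and to reassemble the resulting pieces, which is the technical heart of the section and where the graph-product combinatorics is grafted onto the amalgamated-free-product intertwining techniques of \cite{IoanaAnnENS,VaesAnnENS}.
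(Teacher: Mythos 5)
Part \eqref{Item=Embed1} of your proposal matches the paper's argument essentially step for step: the tensor splitting $M_{\Lambda\cup\Link(\Lambda)}\simeq M_\Lambda\otimes M_{\Link(\Lambda)}$ to upgrade the non-intertwining hypothesis, Lemma \ref{lemma:intersection-conjugated-group} to identify $W_{\Lambda\cup\Link(\Lambda)}\cap \gg W_{\Lambda\cup\Link(\Lambda)}\gg^{-1}$ as $\hh W_{\widetilde\Lambda}\hh^{-1}$, Lemma \ref{Lem=Combinatorics} on a normal form of $\gg$ to show $\widetilde\Lambda$ cannot contain $\Lambda$, and then Lemma \ref{lemma:embeddability-groups}. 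That part is fine.

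For \eqref{Item=Embed2} your skeleton (Lemma \ref{Lem=TransitiveEmbedding}, then part \eqref{Item=Embed1} applied to $B=\theta(pAp)$, then conjugation by $v$) is the paper's, but the step you flag as the ``principal obstacle'' is left genuinely unresolved, and the mechanism you propose for it is not the right one. The corner mismatch $u^*pu\neq p$ is \emph{not} handled by the non-intertwining condition $B\not\prec_{M_\Lambda}M_{\widetilde\Lambda}$ (that hypothesis is consumed entirely in invoking part \eqref{Item=Embed1} for $B$); it is handled by a purely algebraic cutting argument from \cite[Lemma 3.5]{PopaInventionesI}. Concretely, one takes a central projection $z\in A$ with $z=\sum_{j=1}^n v_jv_j^*$ for partial isometries $v_j\in A$ with $v_j^*v_j\leq p$, and computes
\[
pzupz\,(pAp)\subseteq pzuA=pzAu=pAzu\subseteq \sum_{j=1}^n (pAp)\,v_j^*u,
\]
together with the symmetric inclusion, so that $pzupz\in\qNor_{pMp}(pAp)$ exactly; conjugating by $v$ then lands in $\qNor_{qMq}(\theta(pAp))\theta(p)\subseteq M_{\Lambda\cup\Link(\Lambda)}$, and letting $z$ increase to the central support of $p$ gives $vuv^*=vpupv^*\in M_{\Lambda\cup\Link(\Lambda)}$ by weak closedness. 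Finally one sets $p_1=v^*v\in P$ and reads off $rP\prec_M M_{\Lambda\cup\Link(\Lambda)}$ from $x\mapsto vxv^*$ on $p_1\Nor_M(A)''p_1$. Without this (or an equivalent) argument your proof of \eqref{Item=Embed2} is incomplete, and the appeal to ``relative irreducibility of $B$ to control the finitely many corners'' points in the wrong direction.
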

 \begin{proof}
 \eqref{Item=Embed1} Under the isomorphism
$M_{\Lambda \cup \Link(\Lambda)} \simeq  M_{\Lambda } \otimes   M_{\Link(\Lambda)}$  the inclusion $A \subseteq M_\Lambda \subseteq M_{\Lambda \cup \Link(\Lambda)}$ becomes  $A  \otimes 1 \subset   M_{\Lambda } \otimes   M_{\Link(\Lambda)}$. As $A \not\prec_{M_{\Lambda}} M_{\widetilde{\Lambda}_1}$ for any strict subgraph $\widetilde{\Lambda}_1 \subseteq \Lambda$ it follows from Definition \ref{Dfn=Intertwine} \eqref{Item=Intertwine2} that
\[
A  \otimes 1 \not\prec_{M_{\Lambda} \otimes  M_{\Link(\Lambda)}}   M_{\widetilde{\Lambda}_1 } \otimes   M_{\widetilde{\Lambda}_2 }
\]
for any strict subgraph $\widetilde{\Lambda}_1$ of $\Lambda$ and any (non-strict) subgraph $\widetilde{\Lambda}_2$ of $\Link(\Lambda)$.

    We now aim to apply Lemma \ref{lemma:embeddability-groups} applied to $G = W$, $G_1 = W_{\Lambda \cup \Link(\Lambda)}$   and $P = A$.
    Take $\gg \in W \backslash W_{\Lambda \cup \Link(\Lambda)}$.  By Lemma \ref{lemma:intersection-conjugated-group}  we see that
    \begin{equation}\label{Eqn=Conjugate}
    W_{\Lambda \cup \Link(\Lambda)} \cap \gg W_{\Lambda \cup \Link(\Lambda)} \gg^{-1} = \hh W_{\widetilde{\Lambda}} \hh^{-1},
    \end{equation}
    for some $\hh \in W_{\Lambda \cup \Link(\Lambda)} $ and $\widetilde{\Lambda} \subseteq \Lambda \cup \Link(\Lambda)$.

    We claim that $\widetilde{\Lambda}$ does not contain $\Lambda$. Indeed, if we would have  $\Lambda \subseteq \widetilde{\Lambda}$, then \eqref{Eqn=Conjugate} implies that  $W_{\Lambda \cup \Link(\Lambda)} \cap \gg W_{\Lambda \cup \Link(\Lambda)} \gg^{-1} \supseteq \hh W_{\Lambda} \hh^{-1} = W_\Lambda$. That is $\gg W_{\Lambda \cup \Link(\Lambda)} \gg^{-1}  \supseteq W_\Lambda$. We may write $\gg = \gg_1 \gg_2 \gg_3$ with $\gg_1 \in W_\Lambda, \gg_3 \in W_{\Lambda \cup \Link(\Lambda)}$ and $\gg_2$ having no letters from $\Lambda$ at the start and no letters from $\Lambda \cup \Link(\Lambda)$ at the end. Then $\gg_2 W_{\Lambda \cup \Link(\Lambda)} \gg_2^{-1}  \supseteq W_\Lambda$ and then Lemma \ref{Lem=Combinatorics}   implies that $\gg_2 \in W_{ \Link( \Lambda \cup \Link(\Lambda) ) }\subseteq W_{\Link(\Lambda)}$. It follows that $\gg \in W_{\Lambda \cup \Link(\Lambda)}$,  contradiction. It follows that $\widetilde{\Lambda}$ does not contain  $\Lambda$.

    If $
    A \prec_{M_{\Lambda \cup \Link(\Lambda)}}    M_{\Lambda \cup \Link(\Lambda)} \cap \gg M_{\Lambda \cup \Link(\Lambda)} \gg^{-1}$,
    then it follows that $A \prec_{M_{\Lambda \cup \Link(\Lambda)}}  \hh M_{\widetilde{\Lambda}} \hh^{-1}$, equivalently $A \prec_{M_{\Lambda \cup \Link(\Lambda)}}   M_{\widetilde{ \Lambda }}$. This is excluded by assumption and the previous paragraphs. So  $A \not \prec_{M_{\Lambda \cup \Link(\Lambda)}}    M_{\Lambda \cup \Link(\Lambda)} \cap \gg M_{\Lambda \cup \Link(\Lambda)} \gg^{-1}   $ and the assumptions of Lemma \ref{lemma:embeddability-groups}
    are satisfied. That lemma concludes that   $\qNor_{M}(A) \mathbf{1}_A \subseteq M_{\Lambda\cup \Link(\Lambda)}$.

  \eqref{Item=Embed2} We start by observing that $r$ is in particular central in $A$ which we will use a number of times in the proof. The assumptions imply by Lemma \ref{Lem=TransitiveEmbedding} that there exist projections $p \in r A, q \in M_{\Lambda}$ a non-zero partial isometry $v \in q  M p$ and a normal $\ast$-homomorphism $\theta: p  A p \rightarrow q M_{\Lambda} q$ such that $\theta(x) v = v x$ for all $x \in p  A p$ and such that moreover  $\theta(p  A p) \not \prec_{M_{\Lambda}} M_{\widetilde{\Lambda}}$ for any strict subgraph $\widetilde{\Lambda}$ of $\Lambda$. From  \eqref{Item=Embed1} we see that $\qNor_M(\theta(p A p)) \theta(p) \subseteq M_{\Lambda \cup \Link(\Lambda)}$.

Now take $u \in \Nor_M(A)$. We follow the proof of \cite[Lemma 3.5]{PopaInventionesI} or \cite[Lemma 9.4]{IoanaAnnENS}. Take $z \in A$ a central projection such that $z = \sum_{j=1}^n v_j v_j^\ast$ with $v_j \in A$ partial isometries such that $v_j^\ast v_j \leq p$. Then
\[
pz u pz (p Ap) \subseteq pzuA = pz A u = p A zu \subseteq \sum_{j=1}^n (p A v_j) v_j^\ast u \subseteq \sum_{j=1}^n (p A p) v_j^\ast u,
\]
and similarly $(pAp) pz u pz \subseteq \sum_{j=1}^n u v_j (p A p)$. We conclude that $pz u pz \in \qNor_{pMp}(pAp) $.

Now if $x \in \qNor_{pMp}(p A p)$ then by direct verification we see that  $v x v^\ast \in \qNor_{q M q}(\theta(p A p)) \theta(p) $. It follows that $v p z u pz v^\ast$, with $u \in \Nor_M(A)$ as before, is contained in  $\qNor_{q M q}(\theta(pAp)) \theta(p)$ which was contained in  $M_{\Lambda \cup \Link(\Lambda)}$ by the first paragraph of the proof of   \eqref{Item=Embed2}. We may take the projections $z$ to approximate the central support of $p$ and therefore  $v u v^\ast = v p   u p  v^\ast \in M_{\Lambda \cup \Link(\Lambda)}$.  Hence $v  \Nor_{M}( A)'' v^\ast \subseteq M_{\Lambda \cup \Link(\Lambda)}$.
Set $p_1 = v^\ast v \in pA'p$.  Note that $p_1 \leq p \leq r$. As both $A$ and $A'$ are contained in  $\Nor_{M}( A)''$ we find that $p_1 \in \Nor_{M}( A)''$ (as $p \in A$).
So we have the
 $\ast$-homomorphism $\rho: p_1  \Nor_{M}( A)'' p_1  =  p_1  r \Nor_{M}( A)'' p_1   \rightarrow M_{\Lambda \cup \Link(\Lambda)}: x \mapsto v x v^\ast$ with
 $v \in q M p_1$ and clearly $\rho(x) v = v x$. We conclude that    $r \Nor_{M}( A)'' \prec_M  M_{\Lambda \cup \Link(\Lambda)}$.
 \end{proof}

\section{Relative amenability}\label{Sect=RelativeAmenability}

 We start with introducing the notion of relative amenability.
Let $M$ be a finite von Neumann algebra with faithful tracial state $\tau$ and let $Q, P  \subset M$ be   von Neumann subalgebras and assume the inclusion $Q \subseteq M$ is unital.     Let
\[
T_{Q}: L^1(\langle M, e_Q \rangle, \Tr) \rightarrow L^1(M, \tau),
\]
be the unique map defined through $\tau(T_{Q}(y) x) = \Tr(y x)$ for all $y \in L^1(\langle M, e_Q \rangle , \Tr), x \in M$. Then $T_Q$ is the predual of the inclusion map $M \subset \langle M, e_Q \rangle$ and thus is contractive and preserves positivity.
 For the following definition of relative amenability we refer to \cite[Proposition 2.4]{PopaVaesActa}.

 \begin{definition}\label{Dfn=RelativeAmenableFirst}
We say that $P$ is amenable relative to $Q$ inside $M$ if  there exists a $P$-central positive functional on
$\mathbf{1}_P \langle M, e_Q \rangle \mathbf{1}_P$ that restricts to the trace $\tau$ on $\mathbf{1}_P M \mathbf{1}_P$.
\end{definition}

\begin{remark}\label{Rmk=NonUnitalAmenable}
Assume the inclusion $P \subseteq M$ is not unital. Let $p = \mathbf{1}_M - \mathbf{1}_P$. Set $\widetilde{P} = P \oplus \mathbb{C} p$ which is a unital subalgebra of $M$. We claim: $P$ is amenable relative to $Q$ inside $M$ if and only if $\widetilde{P}$ is amenable relative to $Q$ inside $M$.  Indeed, for the if part,   choose a $\widetilde{P}$-central positive functional $\widetilde{\Omega}$  on $\langle M, e_Q \rangle$ that restricts to $\tau$ on $M$. Set $\Omega$ to be the restriction of $\widetilde{\Omega}$ to $\mathbf{1}_P \langle M , e_Q \rangle \mathbf{1}_P$ which then clearly witnesses relative amenability of $P$. For the only if part, let $\Omega$ be  a
$P$-central positive functional  on $\mathbf{1}_P \langle M, e_Q \rangle \mathbf{1}_P$ that restricts to $\tau$ on  $\mathbf{1}_P M \mathbf{1}_P$ then we set $\widetilde{\Omega}(x) = \Omega(p x p ) + \widetilde{\tau}( (\mathbf{1}-p) x (\mathbf{1}-p) )$ for any positive functional $\widetilde{\tau}$ extending $\tau$ from $ (\mathbf{1}-p) M (\mathbf{1}-p)$ to $(\mathbf{1}-p) \langle M , e_Q \rangle (\mathbf{1}-p)$.  Clearly $\widetilde{\Omega}$ witnesses the relative amenability of $\widetilde{P}$.
\end{remark}

\begin{proposition}[Proposition 2.4 of \cite{PopaVaesActa}]\label{Prop=RelativeAmenable}
Assume $P, Q \subseteq M$ are unital von Neumann subalgebras. Then  $P$ is amenable relative to $Q$ inside $M$ if and only if
there exists a net $(\xi_j)_j \in L^2( \langle M, e_Q \rangle, \Tr )^+$ such that:
\begin{enumerate}
    \item   $0\leq T_Q(\xi_j^2) \leq \mathbf{1}$ for all   $j$ and $\lim_j \Vert T_Q(\xi_j^2) - \mathbf{1} \Vert_1 = 0$;
    \item For all $y \in P$ we have $\lim_j \Vert y \xi_j -\xi_j y \Vert_2 = 0$.
\end{enumerate}
\end{proposition}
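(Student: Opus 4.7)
The plan is to prove the two implications separately, starting with the easier direction.

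For ``net implies functional'': given $(\xi_j)_j$ satisfying (1) and (2), define positive functionals $\Omega_j$ on $\langle M, e_Q \rangle$ by $\Omega_j(x) := \Tr(\xi_j x \xi_j)$. Condition (1) gives $\|\xi_j\|_2^2 = \tau(T_Q(\xi_j^2)) \leq 1$, so the $\Omega_j$ are uniformly norm bounded and by Banach--Alaoglu we may pass to a weak-$\ast$ cluster point $\Omega$. For $x \in M$ we have $\Omega_j(x) = \tau(T_Q(\xi_j^2) x) \to \tau(x)$ by (1), so $\Omega|_M = \tau$. For $y \in P$ and $x \in \langle M, e_Q\rangle$ one rewrites
\[
\Omega_j(yx) - \Omega_j(xy) = \Tr\bigl((\xi_j y - y \xi_j) x \xi_j\bigr) + \Tr\bigl(\xi_j x (\xi_j y - y \xi_j)\bigr),
\]
and Cauchy--Schwarz bounds this by $2 \|x\| \|\xi_j\|_2 \|y \xi_j - \xi_j y\|_2 \to 0$ by (2), giving $P$-centrality of $\Omega$.

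For the harder converse I would follow the Connes/Haagerup--Day strategy. Normal positive functionals on $\langle M, e_Q \rangle$ have the form $\omega_\eta(x) = \Tr(\eta x)$ with $\eta \in L^1(\langle M, e_Q \rangle, \Tr)^+$, and are weak-$\ast$ dense in the positive cone of the dual. Under the duality $L^1(M) \leftrightarrow M$ the restriction $\omega_\eta|_M$ corresponds to $T_Q(\eta)$, so the condition $\Omega|_M = \tau$ translates into approximating the constant $\mathbf{1} \in L^1(M)$. A Hahn--Banach separation in the product space $L^1(M) \oplus \langle M, e_Q\rangle^*$ thus yields a net $(\eta_i) \subseteq L^1(\langle M, e_Q \rangle, \Tr)^+$ with $\omega_{\eta_i} \to \Omega$ in weak-$\ast$ and $T_Q(\eta_i) \to \mathbf{1}$ weakly in $L^1(M)$; the $P$-centrality of $\Omega$ translates into $y \eta_i - \eta_i y \to 0$ weakly in $L^1(\langle M, e_Q \rangle, \Tr)$ for every $y \in P$.

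Weak convergence is then upgraded to norm convergence by Day's trick --- weak and norm closure coincide for convex sets --- by replacing $(\eta_i)$ with suitable convex combinations $(\eta_j)$ which satisfy $\|T_Q(\eta_j) - \mathbf{1}\|_1 \to 0$ and $\|y \eta_j - \eta_j y\|_1 \to 0$ for all $y$ in a countable norm-dense subset of $P$, and hence for all $y \in P$. Setting $\xi_j := \eta_j^{1/2} \in L^2(\langle M, e_Q \rangle, \Tr)^+$ gives condition (1) immediately, while a Powers--Stormer type commutator estimate
\[
\|y \xi_j - \xi_j y\|_2^2 \leq C_y \, \|y \eta_j - \eta_j y\|_1
\]
(classical for unitary $y$, extended to $P$ by writing elements as linear combinations of unitaries) upgrades $L^1$ commutator convergence to $L^2$ commutator convergence, giving (2). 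The main obstacle I foresee is the first Hahn--Banach step: one must use that $\Omega|_M = \tau$ is \emph{normal} (not merely a general positive functional) in order to obtain the simultaneous weak $L^1$-approximation $T_Q(\eta_i) \to \mathbf{1}$ rather than only a weak-$\ast$ approximation of $\Omega|_M$.
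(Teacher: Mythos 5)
The paper itself offers no proof of this proposition: it is imported verbatim from \cite{PopaVaesActa}, so there is no in-paper argument to compare against. Judged on its own, your first direction (net $\Rightarrow$ central functional) is complete and correct, and your converse follows exactly the standard Ozawa--Popa/Popa--Vaes route (normal states are weak-$*$ dense; the restriction to $M$ of $\Tr(\eta\,\cdot\,)$ corresponds to $T_Q(\eta)$ under the $L^1$--$L^\infty$ duality, so weak-$*$ convergence to $\Omega$ already gives $T_Q(\eta_i)\to\mathbf{1}$ in $\sigma(L^1(M),M)$ without any extra Hahn--Banach input; then Day's convexity trick and Powers--St\o rmer). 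Your closing worry about normality of $\Omega|_M$ is thus a non-issue.

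There is, however, one genuine gap: condition (1) contains the operator bound $0\leq T_Q(\xi_j^2)\leq\mathbf{1}$, and this is \emph{not} obtained by ``setting $\xi_j:=\eta_j^{1/2}$''. Day's trick only delivers $\Vert T_Q(\eta_j)-\mathbf{1}\Vert_1\to 0$, which says nothing about the pointwise inequality $T_Q(\eta_j)\leq\mathbf{1}$; and this bound is exactly what the paper later uses in the proof of Theorem \ref{Thm=Square} to know that $T_1(\mu^1_{j_1}x\mu^1_{j_1})$ lies in $M$ rather than merely in $L^1(M)$. The missing step is a truncation: replace $\eta_j$ by $a_j\eta_j a_j$ with $a_j:=f(T_Q(\eta_j))\in M$ for $f(s)=\min(s^{-1/2},1)$, so that by $M$-bimodularity of $T_Q$ one gets $T_Q(a_j\eta_j a_j)=a_j^2\,T_Q(\eta_j)=\min(\mathbf{1},T_Q(\eta_j))\leq\mathbf{1}$, and then verify via a second Powers--St\o rmer estimate that $\Vert (a_j\eta_j a_j)^{1/2}-\eta_j^{1/2}\Vert_2\to 0$ (using $\tau((\mathbf{1}-a_j^2)T_Q(\eta_j))\leq\Vert T_Q(\eta_j)-\mathbf{1}\Vert_1$), so that the approximate $P$-centrality survives the truncation. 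A second, more cosmetic, issue: the displayed inequality $\Vert y\xi_j-\xi_j y\Vert_2^2\leq C_y\Vert y\eta_j-\eta_j y\Vert_1$ for non-unitary $y$ is not literally correct --- writing $y=\sum_k c_k u_k$ controls $\Vert y\xi_j-\xi_j y\Vert_2$ by the commutators $\Vert u_k\eta_j-\eta_j u_k\Vert_1$, not by $\Vert y\eta_j-\eta_j y\Vert_1$ --- but this is harmless provided you run the Day convexification over finite sets of \emph{unitaries} of $P$ and recover general $y\in P$ at the very end by linearity.
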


The aim of this section is to prove \cref{Thm=Square} for which we need a number of auxiliary lemmas. Before we state the following lemma, we recall that we identify the vertices of the graph $\Gamma$ with the generators of the vertex groups. Recall further that for $\Lambda \subseteq \Gamma$ we set $\Link(\Lambda) = \bigcap_{v \in \Lambda} \Link(v)$. Then for $\vv \in W$ we let $\Link(\vv) = \Link(\Lambda_\vv)$ where $\Lambda_\vv$ is the set of all letters that occur in $\vv$. Alternatively, $\Link(\vv)$ can be described as the set of all $w \in \Gamma \backslash \Lambda_\vv$ such that $w \vv = \vv w$.

\begin{lemma}\label{Lem=ConditionNest}
Let $\Gamma_1, \Gamma_2$ be subgraphs of $\Gamma$. Let $\vv, \uu \in W$ and write $\vv = \vv_l \vv_c \vv_r, \uu = \uu_l \uu_c \uu_r$ where $\vv_l, \uu_l \in W_{\Gamma_1}, \vv_r, \uu_r \in W_{\Gamma_2}$
and $\vv_c, \uu_c$ has no letters from $\Gamma_1$ at the start and no letters from $\Gamma_2$ at the end. Then, for $x \in M$,
\[
  \mathbb{E}_{M_{\Gamma_2}}(  \lambda_{\vv}^\ast \mathbb{E}_{M_{\Gamma_1} }(x) \lambda_{\uu} )
=
\left\{
\begin{array}{ll}
\lambda_{\vv_r}^\ast
\mathbb{E}_{ M_{\Gamma_1} \cap M_{\Gamma_2} \cap M_{ \Link(\uu_c) } }(  \lambda_{\vv_l}^\ast x \lambda_{\uu_l} ) \lambda_{\uu_r}  & \textrm{ if } \uu_{c} = \vv_c, \\
0   &   \textrm{otherwise}.
\end{array}
\right.
\]
\begin{proof}
We have
\begin{equation}\label{Eqn=ExpectationNest}
 \mathbb{E}_{  M_{\Gamma_2}   }(  \lambda_{\vv}^\ast \mathbb{E}_{  M_{\Gamma_1}  }( x) \lambda_{\uu} ) =
\lambda_{\vv_r}^\ast  \mathbb{E}_{ M_{\Gamma_2} }(  \lambda_{\vv_c}^\ast \mathbb{E}_{ M_{\Gamma_1}  }(   \lambda_{\vv_l}^\ast x \lambda_{\uu_l}   ) \lambda_{\uu_c} ) \lambda_{\uu_r}.
\end{equation}
In case $\uu_c \not = \vv_c$
\cref{Lem=Combinatorics} shows that $\mathbb{E}_{M_{\Gamma_2}}(  \lambda_{\vv_c}^\ast \mathbb{E}_{M_{\Gamma_1}}(  \: \cdot \: ) \lambda_{\uu_c} )$ is the zero map. In case $\uu_c = \vv_c$ \cref{Lem=Combinatorics} shows that
 \[
 \begin{split}
 \mathbb{E}_{  M_{\Gamma_2}  }(  \lambda_{\vv_c}^\ast \mathbb{E}_{   M_{\Gamma_1}  }(  \: \cdot \: ) \lambda_{\uu_c} ) &= \mathbb{E}_{  M_{\Gamma_2}  }(  \lambda_{\vv_c}^\ast \mathbb{E}_{M_{\Gamma_1} \cap M_{\Gamma_2} \cap  M_{
 \Link(\uu_c) }}(  \: \cdot \: ) \lambda_{\uu_c} ).\\
 &= \mathbb{E}_{ M_{\Gamma_1} \cap M_{\Gamma_2} \cap  M_{
 \Link(\uu_c) }}(  \: \cdot \: )
 \end{split}
 \]
 So continuing \eqref{Eqn=ExpectationNest} we get
\[
 \mathbb{E}_{ M_{\Gamma_2}  }(  \lambda_{\vv}^\ast \mathbb{E}_{ M_{\Gamma_1} }( x ) \lambda_\uu ) =
 \delta_{\uu_{c},\vv_c}\lambda_{\vv_r}^\ast
\mathbb{E}_{ M_{\Gamma_1} \cap M_{\Gamma_2} \cap  M_{
 \Link(\uu_c) } }(  \lambda_{\vv_l}^\ast x \lambda_{\uu_l} ) \lambda_{\uu_r},
\]
which is the desired equality.
\end{proof}
\end{lemma}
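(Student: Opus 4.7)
The statement has a very clean structure: each side is a product of three factors, and the middle factor is a conditional expectation. So my plan is to peel off the outer pieces using the bimodule property of conditional expectations and then reduce the entire computation to a single Fourier-theoretic identity to which \cref{Lem=Combinatorics} applies.

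\textbf{Step 1: Reduction to the central block.} Since $\lambda_{\vv_r}, \lambda_{\uu_r}\in M_{\Gamma_2}$, the bimodule property of $\mathbb{E}_{M_{\Gamma_2}}$ lets me extract $\lambda_{\vv_r}^*$ on the left and $\lambda_{\uu_r}$ on the right. Similarly, since $\lambda_{\vv_l}, \lambda_{\uu_l}\in M_{\Gamma_1}$, the bimodule property of $\mathbb{E}_{M_{\Gamma_1}}$ allows me to absorb $\lambda_{\vv_l}^*$ and $\lambda_{\uu_l}$ inside the inner expectation. Writing $y := \mathbb{E}_{M_{\Gamma_1}}(\lambda_{\vv_l}^* x \lambda_{\uu_l})\in M_{\Gamma_1}$, the problem collapses to showing
\[
   \mathbb{E}_{M_{\Gamma_2}}(\lambda_{\vv_c}^* y \lambda_{\uu_c}) \;=\; \delta_{\vv_c,\uu_c}\,\mathbb{E}_{M_{\Gamma_1}\cap M_{\Gamma_2}\cap M_{\Link(\uu_c)}}(y),
\]
for any $y\in M_{\Gamma_1}$, which is precisely where the residual structural hypothesis on $\vv_c,\uu_c$ (no starting letters in $\Gamma_1$, no ending letters in $\Gamma_2$) must be used.

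\textbf{Step 2: Fourier expansion and word combinatorics.} I expand $y = \sum_{\ww\in W_{\Gamma_1}} c_\ww\, \lambda_\ww$. Then
\[
   \mathbb{E}_{M_{\Gamma_2}}(\lambda_{\vv_c}^* y \lambda_{\uu_c}) = \sum_{\ww\in W_{\Gamma_1}} c_\ww\, \mathbb{E}_{M_{\Gamma_2}}(\lambda_{\vv_c^{-1}\ww\uu_c}),
\]
and the $\ww$-th term survives iff $\vv_c^{-1}\ww\uu_c\in W_{\Gamma_2}$, in which case it equals $c_\ww \lambda_{\vv_c^{-1}\ww\uu_c}$. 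This is exactly the setup of \cref{Lem=Combinatorics} (with $\uu=\vv_c$, $\uu'=\uu_c$): the surviving $\ww\in W_{\Gamma_1}$ must satisfy $\ww\in W_{\Gamma_1\cap \Gamma_2}$, $\vv_c=\uu_c$, and $\vv_c$ commutes with $\ww$, i.e. $\ww\in W_{\Link(\uu_c)}$. In particular, if $\vv_c\neq \uu_c$ no terms survive, giving $0$, which handles the second case.

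\textbf{Step 3: Identifying the limit subalgebra.} When $\vv_c=\uu_c$, the commutation $\vv_c\ww=\ww\vv_c$ means $\vv_c^{-1}\ww\uu_c=\ww$, so
\[
   \mathbb{E}_{M_{\Gamma_2}}(\lambda_{\vv_c}^* y \lambda_{\uu_c}) = \sum_{\ww\in W_{\Gamma_1\cap \Gamma_2\cap \Link(\uu_c)}} c_\ww\, \lambda_\ww.
\]
Since $y\in M_{\Gamma_1}$ and $W_{\Gamma_1\cap \Gamma_2\cap \Link(\uu_c)}\subseteq W_{\Gamma_1}$, the right-hand side is precisely $\mathbb{E}_{M_{\Gamma_1\cap \Gamma_2\cap \Link(\uu_c)}}(y)$. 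Reassembling with the factors peeled off in Step 1 yields the claimed formula.

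\textbf{Main obstacle.} Conceptually, the whole proof hinges on the combinatorial identification in Step 2; fortunately \cref{Lem=Combinatorics} was stated in exactly the shape needed here. The only technical care is to ensure the decomposition $\vv=\vv_l\vv_c\vv_r$ actually corresponds to a reduced-word factorisation, so that the bimodule moves of Step 1 are unambiguous; this is immediate from the hypothesis that $\vv_c$ has no $\Gamma_1$-letters at the start and no $\Gamma_2$-letters at the end.
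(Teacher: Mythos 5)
Your proposal is correct and follows essentially the same route as the paper's proof: Step 1 is precisely the paper's identity \eqref{Eqn=ExpectationNest} obtained from the bimodule property of the conditional expectations, and Steps 2--3 are the application of Lemma \ref{Lem=Combinatorics} to identify which terms survive and to recognise the resulting map as $\mathbb{E}_{M_{\Gamma_1}\cap M_{\Gamma_2}\cap M_{\Link(\uu_c)}}$. The only difference is presentational: you make the Fourier expansion of the central block explicit, whereas the paper leaves that routine step implicit.
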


\begin{remark}
In Lemma \ref{Lem=ConditionNest} in   the decomposition  $\vv = \vv_l \vv_c \vv_r$  the word  $\vv_c$ is  unique and therefore the statement of the lemma is well-defined. Note that $\vv_l$ and  $\vv_r$ may not be unique.   The same holds for $\uu$.
\end{remark}

Let $M,Q,N$ be tracial von Neumann algebras and let $_MH_Q$ and $_QK_N$ be bimodules. Recall that a vector $\xi\in H$ is called \textit{right $Q$-bounded} if there exists $C>0$ s.t. $\|\xi y\|\leq C\|y\|$ for all $y\in Q$.
For a right $Q$-bounded vector $\xi\in H$ we define $L(\xi) \in B(L^2(Q,\tau), H)$ as $L(\xi)x = \xi x$ where $x \in Q$.
Then, for right $Q$-bounded vectors $\xi,\eta\in H$ we have that $L(\eta)^*L(\xi) \in Q$.
 \noindent We denote by $H_0\subseteq H$ the subspace of all right $Q$-bounded vectors.
We equip the algebraic tensor product $H_0\otimes K$ with the (possibly degenerate) inner product
\begin{align}
	\langle \xi_1 \otimes \eta_1,\xi_2\otimes \eta_2\rangle_{H_0\otimes_{Q} K} := \langle L(\xi_2)^*L(\xi_1)\eta_1,\eta_2\rangle_K.
\end{align}
The Connes tensor product $H\otimes_{Q} K$ is the Hilbert space obtained from $H_0\otimes_{\alg} K$ by quotienting out the degenerate part and taking a completion. The Hilbert space $H\otimes_{Q} K$ is a $M$-$N$ bimodule with the action
$$x\cdot(\xi \otimes_{Q} \eta)\cdot y = (x\xi) \otimes_{Q} (\eta y).$$

\begin{remark}\label{remark:calculation-connes-fusion}
    We calculate the operator $L(\xi_2)^*L(\xi_1)$ for certain bimodules and vectors $\xi_1,\xi_2 \in H_0$. Let $(M,\tau)$ be a tracial von Neumann algebra and let $P,Q\subseteq M$ be  von Neumann subalgebras   with $Q$ unital. Consider the bimodule $_PL^2(M,\tau)_{Q}$. Let $x,y\in M$. Then $x,y$ are right $Q$-bounded and thus $L(x),L(y):L^2(Q,\tau)\to L^2(M,\tau)$ are well-defined. We calculate $L(x)^*L(y)$. For $q_1,q_2\in L^2(Q,\tau)$ we have
    \begin{align}
        \langle L(x)^*L(y)q_1, q_2\rangle =
        \langle yq_1, xq_2\rangle
        = \tau(q_2^*x^*yq_1)
        = \tau(q_2^*\EE_{Q}(x^*y)q_1)
        =\langle \EE_{Q}(x^*y)q_1, q_2\rangle.
    \end{align}
    Thus $L(x)^*L(y) = \EE_{Q}(x^*y)$.

    Let $R\subseteq M$ be a  unital von Neumann subalgebra and let $N = \langle M,e_{R}\rangle$, where $e_R$ denotes the Jones projection of the inclusion $R\subseteq M$. We consider the bimodule $ _PL^2(N,\Tr)_{Q}$.
    For $x,x',y,y'\in M$ we have that $xe_{R}y$ and $x'e_{R}y'$ are right $Q$-bounded vectors as they are elements in $N$. We calculate $L(xe_{R}y)^*L(x'e_{R}y')$. For $q_1, q_2 \in Q$ we have,
    \[
    \begin{split}
        \langle L(xe_{R}y)^*L(x'e_{R}y')q_1,q_2\rangle&=
        \langle x'e_{R}y'q_1,xe_{R}yq_2\rangle \\
        &= \Tr(q_2^*y^*e_{R}x^*x'e_{R}y'q_1)\\
        &= \Tr(q_2^*y^*\EE_{R}(x^*x')e_{R}y'q_1)\\
        &= \tau(q_2^*y^*\EE_{R}(x^*x')y'q_1)\\
        &= \tau(\EE_{Q}(q_2^*y^*\EE_{R}(x^*x')y'q_1))\\
        &= \tau(q_2^*\EE_{Q}(y^*\EE_{R}(x^*x')y')q_1)\\
        &=\langle \EE_{Q}(y^*\EE_{R}(x^*x')y')q_1,q_2\rangle.
    \end{split}
    \]
    Thus we obtain $L(xe_{R}y)^*L(x'e_{R}y') =\EE_{Q}(y^*\EE_{R}(x^*x')y').$
\end{remark}

The proof of the following theorem follows \cite[Proposition 2.7]{PopaVaesActa} but in our case the subalgebras are not regular.

\begin{theorem}\label{Thm=Square}
Let $\Gamma_1, \Gamma_2$ be subgraphs of $\Gamma$.
 Suppose that $P \subseteq M := M_\Gamma = \cL(W)$ is a von Neumann algebra that is amenable relative to $Q_i := M_{\Gamma_i} = \cL(W_{\Gamma_i})$ inside $M$ for  $i = 1,2$.   Then $P$ is amenable relative to $Q_1\cap Q_2 = M_{\Gamma_1 \cap \Gamma_2}$ inside $M$.
\end{theorem}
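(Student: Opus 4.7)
The strategy follows \cite[Proposition 2.7]{PopaVaesActa}: we combine nets witnessing the relative amenability of $P$ with respect to each $Q_i$ via Connes fusion to produce approximately $P$-central, approximately tracial vectors in the bimodule $H$, and then use the density and inner-product computations of Lemmas~\ref{Lem=Density}--\ref{Lem=InnerProudct} to embed $H$ as an $M$-$M$ sub-bimodule of a multiple of the coarse bimodule $L^2(M)\otimes \overline{L^2(M)}$. This exhibits the trivial $P$-$P$ sub-bimodule inside the coarse bimodule and hence yields amenability of $P$ via Proposition~\ref{Prop=RelativeAmenable} applied with $Q=\CC$.

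\emph{Step 1 (nets from relative amenability).} For each $i=1,\ldots,n$ apply Proposition~\ref{Prop=RelativeAmenable} to obtain a net $(\xi_j^{(i)})_j\subseteq L^2(\langle M,e_{Q_i}\rangle,\Tr)^+$ satisfying $\|T_{Q_i}((\xi_j^{(i)})^2)-\mathbf{1}\|_1\to 0$ and $\|y\xi_j^{(i)}-\xi_j^{(i)} y\|_2\to 0$ for all $y\in P$. Passing to approximations by right-bounded vectors of the form $\sum_k a_k e_{Q_i} b_k$ (which are dense in $L^2(\langle M,e_{Q_i}\rangle,\Tr)$), we may assume each $\xi_j^{(i)}$ is right-bounded, so that the Connes fusion product
\[
\eta_j \;:=\; \xi_j^{(1)}\otimes_M \xi_j^{(2)}\otimes_M \cdots \otimes_M \xi_j^{(n)}\;\in\; H
\]
is well defined.

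\emph{Step 2 (properties of $\eta_j$).} Bimodularity of each tensor slot together with the approximate centrality of $\xi_j^{(i)}$ with respect to $P$ gives $\|y\eta_j-\eta_j y\|_2\to 0$ for all $y\in P$. To see that $\eta_j$ is \emph{approximately tracial}, namely $\langle y\eta_j,\eta_j\rangle\to \tau(y)$ for $y\in M$, iterate Remark~\ref{Rmk=ConnesFusion} to rewrite the inner product as a nested conditional expectation of the type appearing in Lemma~\ref{Lem=InnerProudct}; the hypothesis $\bigcap_i\Gamma_i=\emptyset$ (equivalently $\bigcap_i Q_i=\CC$) then forces this nested expectation, after the $T_{Q_i}((\xi_j^{(i)})^2)$ factors have absorbed into $\mathbf{1}$, to collapse to $\tau$.

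\emph{Step 3 (embedding $H$ into a multiple of the coarse bimodule).} By Lemma~\ref{Lem=Density} the vectors $xe_{Q_1}\lambda_{\vv_1}\otimes_M\cdots\otimes_M e_{Q_n}y$ with $\vv\in V$ and $x,y\in M$ span a dense subspace of $H$. Define the bimodule map
\[
\Phi\colon H\;\longrightarrow\;\bigoplus_{\vv\in V}\bigl(L^2(M)\otimes\overline{L^2(M)}\bigr),\qquad xe_{Q_1}\lambda_{\vv_1}\otimes_M\cdots\otimes_M e_{Q_n}y\;\longmapsto\; (x\otimes \ol{y^*})_{\vv}
\]
on the spanning vectors. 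Lemma~\ref{Lem=InnerProudct} says precisely that $\Phi$ is an $M$-$M$ bimodular isometry, so $\Phi$ extends to an $M$-$M$ bimodule embedding of $H$ into a (possibly infinite) direct sum of copies of the coarse bimodule.

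\emph{Step 4 (conclusion).} Transporting the vectors $\eta_j$ along $\Phi$ yields approximately $P$-central, approximately tracial vectors in a multiple of $L^2(M)\otimes\overline{L^2(M)}$. Re-reading these through Proposition~\ref{Prop=RelativeAmenable} with $Q=\CC$ (using the identification $L^2(\langle M,e_\CC\rangle,\Tr)\cong L^2(M)\otimes\overline{L^2(M)}$ as an $M$-$M$ bimodule) gives the net required to witness that $P$ is amenable relative to $\CC$ inside $M$, i.e.\ that $P$ is amenable.

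The main difficulty is Step~2: assembling the Connes fusion vector $\eta_j$ and verifying the approximate traciality. Pointwise this reduces to showing that iterated conditional expectations $\mathbb{E}_{Q_n}(\lambda_{\vv_{n-1}}^\ast \cdots \mathbb{E}_{Q_1}(\,\cdot\,)\lambda_{\vv_1}\cdots)$ applied to elements of $M$ collapse, after the $T_{Q_i}$-absorption, to $\tau$. This is exactly the content of the combinatorial reduction carried out in Lemmas~\ref{Lem=ConditionNest} and~\ref{Lem=InnerProudct}, where the hypothesis $\bigcap_i\Gamma_i=\emptyset$ enters decisively at the last step.
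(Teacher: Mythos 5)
Your overall architecture matches the paper's: build approximately $P$-central, approximately tracial vectors in the fused bimodule $H$, then use Lemmas~\ref{Lem=Density} and~\ref{Lem=InnerProudct} to see that $H$ embeds into a multiple of the coarse bimodule, and conclude via relative amenability over $\CC$. Steps 3 and 4 are essentially the paper's argument. However, Step 2 as written has a genuine gap, and you have mislocated where the hypothesis $\bigcap_i \Gamma_i = \emptyset$ is actually used.

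The gap: you form $\eta_j = \xi_j^{(1)}\otimes_M\cdots\otimes_M\xi_j^{(n)}$ with a \emph{common} index $j$ in every slot and assert that $\langle x\eta_j,\eta_j\rangle\to\tau(x)$. Unwinding the fusion inner product gives
$\langle x\eta_j,\eta_j\rangle=\bigl\langle T_{Q_{n-1}}\bigl(\xi^{(n-1)}_j\cdots T_{Q_1}(\xi^{(1)}_j x\, \xi^{(1)}_j)\cdots\xi^{(n-1)}_j\bigr)\xi^{(n)}_j,\ \xi^{(n)}_j\bigr\rangle$,
so at stage $i$ you need $\xi^{(i)}_j$ to be approximately tracial on the element $T_{Q_{i-1}}(\xi^{(i-1)}_j\cdots)\in M$, which itself depends on $j$. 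The net $(\xi^{(i)}_j)_j$ only gives $\langle x\xi^{(i)}_j,\xi^{(i)}_j\rangle\to\tau(x)$ pointwise in a \emph{fixed} $x$, so the diagonal choice does not yield the limit. The paper resolves this exactly as in \cite[Proposition 2.4]{PopaVaesActa}: fix finite sets $\mathcal F\subseteq P$, $\mathcal G\subseteq M$ and $\varepsilon>0$, choose $j_1$ first, then form the finite set $\mathcal G^2=T_{Q_1}(\mu^1_{j_1}\mathcal G\mu^1_{j_1})$ (finite and in $M$ because $T_{Q_1}((\mu^1_{j_1})^2)\le\mathbf 1$), choose $j_2$ to be $\varepsilon$-tracial on $\mathcal G^2$, and so on; a telescoping estimate then gives $|\langle x\mu_j,\mu_j\rangle-\tau(x)|\le n\varepsilon$. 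Your argument needs this iterative choice of indices $j_1,\dots,j_n$ (or an equivalent device); as stated it does not go through. Note also that no intersection hypothesis is needed for this part: the approximate traciality of the combined net holds for arbitrary subalgebras $Q_i$. The hypothesis $\bigcap_i\Gamma_i=\emptyset$ enters only in your Step 3, through Lemma~\ref{Lem=InnerProudct}, where it forces the nested conditional expectations on the \emph{spanning vectors} $xe_{Q_1}\lambda_{\vv_1}\otimes_M\cdots\otimes_M e_{Q_n}y$ to collapse to $\tau(x^*x')\tau(y^*y')\delta_{\uu,\vv}$, i.e.\ it is what makes $H$ a multiple of the coarse bimodule. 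Your "main difficulty" paragraph attributes this collapse to the vectors $\eta_j$, which conflates the two separate mechanisms. Finally, two minor points: the reduction to a unital inclusion $P\subseteq M$ (Remark~\ref{Rmk=NonUnitalAmenable}) should be recorded, and your right-boundedness approximation in Step 1 is unnecessary since $T_{Q_i}((\xi^{(i)}_j)^2)\le\mathbf 1$ already implies each $\xi^{(i)}_j$ is right-bounded.
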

\begin{proof}
By \cref{Rmk=NonUnitalAmenable} we may assume without loss of generality that the inclusion $P \subseteq M$ is unital and use the characterisation of relative amenability given by \cref{Prop=RelativeAmenable}.

  As before, let  $T_i = T_{Q_i}: L^1( \langle M, e_{Q_i}\rangle) \rightarrow L^1(M)$  be the contraction determined by $\tau(T_i(S) x) = \Tr_i(Sx)$ for $S \in L^1( \langle M, e_{Q_i} \rangle ) $ and $x \in M$. Since $P$ is amenable relative to $Q_i$, \cref{Prop=RelativeAmenable}  implies the existence of nets $(\mu_{j}^i)_j$ in $L^2(\langle M, e_{Q_i} \rangle )^+$ satisfying
\begin{equation}\label{Eqn=AmenableTensor}
0 \leq    T_i(    (\mu_{j}^i)^2 ) \leq \mathbf{1}, \qquad \Vert T_i(  (\mu_j^i)^2) - \mathbf{1}\Vert_1 \rightarrow 0, \qquad \Vert y \mu_{j}^i - \mu_{j}^i y \Vert_2 \rightarrow 0, \textrm{ for all } y \in P,
\end{equation}
where the limits are taken over $j$. Consider the $M$-$M$ bimodule
\[
H = L^2( \langle M, e_{Q_1} \rangle ) \otimes_M L^2( \langle M, e_{Q_2} \rangle ).
\]
\noindent {\it Claim:} As in \cite{PopaVaesActa} we claim that tensor products $\mu_{j} := \mu^1_{j_1} \otimes\mu^2_{j_2}\in H$ for certain $j = (j_1, j_2)$ can be combined into a net such that
\[
\Vert y \mu_j - \mu_j y \Vert \rightarrow 0, \qquad \vert \langle x \mu_j, \mu_j \rangle - \tau(x) \vert \rightarrow 0,
\]
 for all $y\in P$, $x\in M$, where the limit is taken over $j$.
 Let us now prove this claim in the next paragraphs which repeats the argument used in \cite[Proposition 2.4]{PopaVaesActa}.

\vspace{0.3cm}

\noindent {\it Proof of the claim.} Take $\mathcal{F} \subseteq P, \mathcal{G} \subseteq M$ finite and let $\varepsilon > 0$.
Set $\mathcal{G}^1 := \mathcal{G}$ and fix $j_1$ such that $\Vert y \mu^1_{j_1} - \mu^1_{j_1} y \Vert_2 \leq \varepsilon$  for all $y \in \mathcal{F}$
and $\vert \langle x \mu_{j_1}^1, \mu_{j_1}^1 \rangle - \tau(x) \vert \leq \varepsilon$ for all $x \in \mathcal{G}^1$.
As $0 \leq   T_i(    (\mu_{j_1}^i)^2 ) \leq \mathbf{1}$ and as $T_i$ preserves positivity, it follows that for $x \in M$ the element $T_1(\mu_{j_1}^1 x \mu_{j_1}^1) \in L^1(M)$ is bounded in the uniform norm and thus belongs to $M$.
Set $\mathcal{G}^2 := T_1( \mu_{j_1}^1 \mathcal{G}^1 \mu_{j_1}^1) \subseteq M$, which is finite. We may proceed from $\mathcal{F}$ and $\mathcal{G}^2$ to find $j_2$ such that  $\Vert y \mu^2_{j_2} - \mu^2_{j_2} y \Vert_2 \leq \varepsilon$  for all $y \in \mathcal{F}$ and $\vert \langle x \mu_{j_2}^2, \mu_{j_2}^2 \rangle - \tau(x) \vert \leq \varepsilon$ for all $x \in \mathcal{G}^2$. Put $j= (j_1,j_2)$ and set $\mu_j =  \mu^1_{j_1} \otimes \mu^2_{j_2}$.

It follows by the triangle inequality that for $y \in \mathcal{F}$ we have
\[
\Vert y \mu - \mu y \Vert \leq \Vert (y\mu^{1}_{j_1}-\mu^1_{j_1}y) \otimes_M    \mu^{2}_{j_{2}}  \Vert + \Vert \mu^{1}_{j_{1}} \otimes_M (y\mu^{2}_{j_2}-\mu^2_{j_2}y) \Vert \\
\leq  2 \varepsilon.
\]
Now, by construction of the sets $\mathcal{G}^i$ and the vectors $\mu^i_{j_i}$ we see that for  $x \in \mathcal{G}$ that
\begin{align*}
		\vert \langle x \mu, \mu \rangle - \tau( x)  \vert &\leq |\langle x\mu,\mu\rangle - \langle x\mu_{j_2}^2,\mu_{j_2}^2\rangle| +
		|\langle x\mu_{j_2}^2,\mu_{j_2}^2\rangle - \tau(x)|\\
		&\leq |\langle T_1(\mu_{j_1}^1 x \mu_{j_1}^1)\mu_{j_2}^2,\mu_{j_2}^2\rangle - \langle x\mu_{j_2}^2,\mu_{j_2}^2\rangle| +
		|\langle x\mu_{j_2}^2,\mu_{j_2}^2\rangle - \tau(x)|\\
		&\leq 2\epsilon
\end{align*}

Taking $j = j(\mathcal{F}, \mathcal{G})$ with increasing sets $\mathcal{F}$ and $\mathcal{G}$ as before gives a net of vectors $\mu_j \in H$ with the property that
\[
\Vert y \mu_{j} - \mu_j y \Vert \rightarrow 0, \qquad \vert \langle x \mu_j, \mu_j \rangle - \tau(x) \vert \rightarrow 0
\]
for all $y\in P$ and $x\in M$. This proves the claim.

\vspace{0.3cm}

\noindent {\it Remainder of the proof.} The net $(\mu_j)_j$ in particular shows that the bimodule $_M L^2(M)_P$ is weakly contained in $_M H_P$.
Denote $Q_0 := Q_1\cap Q_2$ and put
	$K :=L^2(M,\tau)\otimes_{Q_0} L^2(M,\tau)$.  We show that  $_{M} H_{M}$ is contained in the bimodule $_{M} L^2(M)\otimes_{Q_0} K_{M}$. Let
 $$V = \{\vv\in W \mid \vv \text{ does not start with letters from } \Gamma_1 \text{ and does not end with letters from } \Gamma_2\}.$$
Observe that the subspace
	$$H_0:=\Span\{x e_{Q_1} \lambda_{\vv} \otimes_{M}   e_{Q_2} z \mid  x, z \in M, \vv \in V\}$$ is dense in $H$. Indeed, it is clear that the span of vectors $x e_{Q_1} \lambda_{\vv} \otimes_{M}   e_{Q_2} z$ with $x,z\in M$ and $\vv\in W$  is dense in $H$. For $\vv\in W$ we can write $\vv=\vv_l\vv_c\vv_r$ with $\vv_l\in W_{\Gamma_1}$, $\vv_r\in W_{\Gamma_2}$ and $\vv_c\in V$. Therefore we obtain  $$x e_{Q_1} \lambda_{\vv} \otimes_{M}   e_{Q_2} z = (x\lambda_{\vv_l}) e_{Q_1} \lambda_{\vv_c}\otimes_{M}   e_{Q_2} (\lambda_{\vv_r}z)\in H_0$$
	which shows density of $H_0\subseteq H$. Define  $U:H_0\to L^2(M)\otimes_{Q_0} K$ as
	\begin{align*}
		x e_{Q_1} \lambda_{\vv} \otimes_{M}   e_{Q_2}z  \mapsto x  \otimes_{Q_0} \lambda_{\vv} \otimes_{Q_0} z &\quad \text{ for } x,z\in M, \vv\in V
	\end{align*}
    We now use \cref{Lem=ConditionNest} and the calculations from \cref{remark:calculation-connes-fusion} to show that $U$ is isometric. Indeed, for $x,x',z,z'\in M$ and $\vv,\uu\in V$ we find,
	\[
 \begin{split}
		\langle x' \otimes_{Q_0} \lambda_{\uu} \otimes_{Q_0} z', x\otimes_{Q_0} \lambda_{\vv}\otimes_{Q_0} z\rangle_{L^2(M)\otimes_{Q_0} K}
		&= \langle \EE_{Q_0}(x^*x')\lambda_{\uu}\otimes_{Q_0} z',\lambda_{\vv}\otimes_{Q_0} z\rangle_K\\
		&= \langle\EE_{Q_0}(\lambda_{\vv}^*\EE_{Q_0}(x^*x')\lambda_{\uu})z',z\rangle\\
		&= \delta_{\vv,\uu}\langle\EE_{Q_0\cap \Link(\vv)}(x^*x')z',z\rangle\\
		&=	\langle \EE_{Q_2}(\lambda_{\vv}^*\EE_{Q_1}(x^*x')\lambda_{\uu})z', z\rangle \\
		&=	\langle T_{Q_2}(\EE_{Q_2}(\lambda_{\vv}^*\EE_{Q_1}(x^*x')\lambda_{\uu}) e_{Q_2}z'), z\rangle \\
		&=	\langle T_{Q_2}(e_{Q_2}\lambda_{\vv}^*\EE_{Q_1}(x^*x')\lambda_{\uu}e_{Q_2}z'), z\rangle \\
		&=	\langle \lambda_{\vv}^*\EE_{Q_1}(x^*x')\lambda_{\uu}e_{Q_2}z', e_{Q_2} T_{Q_2}^*(z)\rangle \\
		&=	\langle \EE_{M_{\Gamma}}(\lambda_{\vv}^*\EE_{Q_1}(x^*x')\lambda_{\uu})e_{Q_2}z', e_{Q_2}z\rangle\\
       &=	\langle x'e_{Q_1}\lambda_{\uu} \otimes_{M_{\Gamma}} e_{Q_2}z', xe_{Q_1}\lambda_{\vv} \otimes_{M_{\Gamma}} e_{Q_2}z\rangle_H
	\end{split}
 \]
	Thus $U$ extends to an isometry $H\to L^2(M)\otimes_{Q_0} K$, which clearly is $M$-$M$-bimodular.
	We have shown that $_{M}L^2(M)_P$    is weakly contained in $_{M}L^2(M) \otimes_{Q_0} K_P$, which by \cite[Proposition 2.4 (3)]{PopaVaesActa} means that $P$ is amenable relative to $Q_0 = Q_1\cap Q_2$.
\end{proof}

\section{Main theorem: classifying strong solidity for right-angled Coxeter groups}

In this section we collect our main results. The proof is strongly based on the following alternatives for amalgamated free product decompositions.

\begin{theorem}[Theorem A of \cite{VaesPrims}]
\label{thm:prelim:alternatives}
    Let $(N_1,\tau_1)$,$(N_2,\tau_2)$ be tracial von Neumann algebras with a common von Neumann subalgebra $B\subseteq N_i$ satisfying $\tau_1|_{B} = \tau_2|_{B}$ and denote $N:= N_1 \ast_{B} N_2$ for the amalgamated free product. Let $A\subseteq N$ be a von Neumann subalgebra  that is amenable relative  to  $N_1$ or $N_2$ inside $N$. Put $P = \Nor_{\mathbf{1}_{A}N\mathbf{1}_{A}}(A)''$. Then  at least one of the following is true:
    \begin{enumerate}[(i)]
        \item $A\prec_{N} B$,
        \item $P \prec_{N} N_i$ for some $i=1,2$,
        \item $P$ is amenable relative to $B$ inside $N$.
    \end{enumerate}
\end{theorem}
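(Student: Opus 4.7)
My plan is to follow the standard deformation/rigidity playbook for amalgamated free products, due originally to Ioana--Peterson--Popa and refined by Ioana and Vaes. The starting point would be to build a malleable dilation of $N = N_1 \ast_B N_2$. Concretely, I would enlarge $B$ to $\widetilde{B} = B \ast L(\mathbb{F}_\infty)$ (free product over $\mathbb{C}$) and consider $\widetilde{N} = (N_1 \ast_B \widetilde{B}) \ast_{\widetilde{B}} (N_2 \ast_B \widetilde{B})$; using suitable Haar unitaries coming from $L(\mathbb{F}_\infty)$, one obtains an $s$-malleable deformation $(\alpha_t, \beta)$ of $N \subseteq \widetilde{N}$, meaning a one-parameter group $\alpha_t$ of trace-preserving automorphisms of $\widetilde{N}$ with $\alpha_t \to \id$ pointwise in $\|\cdot\|_2$, together with an involution $\beta$ satisfying $\beta \alpha_t \beta = \alpha_{-t}$ and $\beta|_{N} = \id$.

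The next step is a spectral gap argument. Using the assumption that $A$ is amenable relative to some $N_i$ inside $N$, I would produce, for any $\varepsilon > 0$, some $t > 0$ such that $\|\alpha_t(u) - E_N(\alpha_t(u))\|_2 < \varepsilon$ for all unitaries $u$ in the unit ball of $A$, uniformly. This is where the relative amenability hypothesis is crucial: it provides a net of almost-$A$-central vectors in the Jones bimodule, which one pairs against the deformation using that the $M$-$M$ bimodule $L^2(\widetilde{N}) \ominus L^2(N)$ is weakly contained in a coarse-type $N_i$-bimodule. Having uniform convergence of $\alpha_t$ on the unit ball of $A$, I would then use Popa's transversality (a general consequence of $s$-malleability) together with the fact that unitaries in $\Nor_{\mathbf{1}_A N \mathbf{1}_A}(A)$ conjugate $A$ to itself, to propagate uniform convergence to the whole normalizer: $\alpha_t$ converges uniformly on the unit ball of $P$.

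Once $\alpha_t$ converges uniformly on $P$, the trichotomy is extracted from an intertwining dichotomy for the deformed inclusion $\alpha_t(P) \subseteq \widetilde{N}$. Using Popa's intertwining-by-bimodules for the AFP structure of $\widetilde{N}$, one of three things must happen: either $A$ intertwines into the common subalgebra $B$ (case (i)), or the normalizer intertwines into one of the factors $N_i$ (case (ii)), or the deformation converges on $P$ in a way compatible with the $B$-valued conditional expectation, which translates (via a standard ultrapower / weak containment argument) into a $P$-central state on $\mathbf{1}_P \langle N, e_B \rangle \mathbf{1}_P$ restricting to $\tau$ on $\mathbf{1}_P N \mathbf{1}_P$, giving case (iii).

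The main obstacle I anticipate is the uniform propagation of the spectral gap from $A$ to $P$: the normalizer $P$ can be considerably larger than $A$ and is only defined via conjugation relations, so one must carefully combine the relative amenability input with the $s$-malleability to get the transversality-type estimate in a bimodular fashion. A second technical nuisance is that the inclusion $A \subseteq \mathbf{1}_A N \mathbf{1}_A$ is generally non-unital; I would handle this exactly as in \cref{Rmk=NonUnitalAmenable}, working in the cut-down $\mathbf{1}_A N \mathbf{1}_A$ and padding by a rank-one projection so that all the deformation/rigidity machinery applies in the unital setting.
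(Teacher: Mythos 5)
First, a point of order: the paper does not prove this statement at all --- it is imported verbatim as Theorem A of \cite{vaesNormalizersAmalgamatedFree2014} --- so there is no internal proof to compare your sketch against; it can only be judged as a reconstruction of Vaes's argument. On those terms there are two problems. The minor one is the dilation: for the free-product automorphism $\alpha_t = \Ad(u_1^t) \ast_B \Ad(u_2^t)$ to be well defined on $N_1 \ast_B N_2$ the Haar unitaries must commute with $B$, so the Ioana--Peterson--Popa enlargement is $\widetilde{B} = B \,\overline{\otimes}\, \cL(\mathbb{F}_2)$, not the free product $B \ast \cL(\mathbb{F}_\infty)$; with your $\widetilde{B}$ the two conjugations do not even restrict to a common automorphism of $B$, so the deformation you describe does not exist.

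The serious gap is the ``spectral gap'' step. You claim that relative amenability of $A$ with respect to some $N_i$ produces, for every $\varepsilon>0$, a $t>0$ with $\Vert \alpha_t(u) - \mathbb{E}_N(\alpha_t(u))\Vert_2 < \varepsilon$ uniformly over unitaries $u \in A$. By Popa's transversality this is exactly uniform convergence of $\alpha_t$ on $\mathcal{U}(A)$, and for the IPP deformation uniform convergence on $\mathcal{U}(A)$ is the standard criterion forcing $A \prec_N B$ or $A \prec_N N_i$ --- a far stronger conclusion than the trichotomy, and one that is false under your hypotheses. Indeed, any diffuse amenable $A$ is amenable relative to $N_1$, yet in $\cL(\mathbb{Z}) \ast \cL(\mathbb{Z}) = \cL(\mathbb{F}_2)$ the diffuse abelian algebra generated by the product $ab$ of the two free generators intertwines into neither free factor nor into $\mathbb{C}$ (no conjugate of a nontrivial power of $ab$ lies in $\langle a \rangle$ or $\langle b \rangle$). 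The underlying issue is directional: relative amenability of $A$ is an ``upper bound'' hypothesis, whereas uniform convergence of a malleable deformation is extracted from rigidity (property (T), or non-amenability relative to the complementary side). In the actual proofs (Ioana's Ann.\ ENS theorem and Vaes's refinement of it) the hypothesis is used the other way around: one assumes $A \not\prec_N B$ and $P \not\prec_N N_i$, and combines the $A$-central states on $\langle N, e_{N_i}\rangle$ with the decomposition of $L^2(\widetilde{N}) \ominus L^2(N)$ into $N$-bimodules weakly contained in $L^2(N) \otimes_B L^2(N)$ to manufacture a $P$-central state on $\langle N, e_B\rangle$; no uniform convergence on $\mathcal{U}(A)$, let alone on $P$, is ever established. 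Since your step 3 is predicated on that uniform convergence, the proposed proof does not go through.
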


Recall also the following observation.

\begin{proposition}\label{Prop=EmbedSolid}
Let $N \subseteq M$ be a von Neumann subalgebra and assume $N$ is strongly solid.
Let $A \subseteq M$ be diffuse amenable and let $P = \Nor_M(A)''$ and let $z \in P \cap P'$ be a non-zero projection. Assume that $zP \prec_M N$. Then $zP$ has an amenable direct summand.
\end{proposition}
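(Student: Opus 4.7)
The plan is to use the intertwining $zP\prec_M N$ to push a corner of $zP$ into $N$ via a partial isometry, then exploit strong solidity of $N$ applied to the image of the corresponding corner of $A$ in order to deduce amenability of a corner of $zP$. An amenable direct summand of $zP$ will then be extracted by a central support argument.

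First, I would apply Definition~\ref{Dfn=Intertwine}~\eqref{Item=Intertwine1} to the relation $zP\prec_M N$ to obtain projections $p\in zP$, $q\in N$, a non-zero partial isometry $v\in qMp$, and a normal $*$-homomorphism $\theta:p(zP)p\to qNq$ with $\theta(x)v=vx$ for all $x\in p(zP)p$. Set $p_0:=v^*v\leq p$ and $q_0:=vv^*\leq q$. Multiplying $\theta(x)v=vx$ by $v^*$ on the left yields $v^*\theta(x)v=p_0 x$, while taking adjoints and multiplying by $v$ on the right yields $v^*\theta(x)v=xp_0$. So $p_0 x=xp_0$ for every $x\in p(zP)p$, and in particular $p_0$ commutes with $pAp$ and $pPp$ inside $pMp$. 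The formula $\widetilde{\theta}(x):=vxv^*$ then defines an injective normal $*$-homomorphism $\widetilde{\theta}:p_0 P p_0\to q_0 N q_0\subseteq N$.

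Since $p_0\in A'$, the compression $p_0 A p_0=A p_0$ is isomorphic to the reduction of $A$ by the central support of $p_0$ in $Z(A)$, hence is diffuse and amenable. Thus $B:=\widetilde{\theta}(p_0 A p_0)\subseteq N$ is diffuse amenable, so strong solidity of $N$ gives that $\mathcal{N}:=\Nor_N(B)''$ is amenable. The key step, and the main obstacle, is to show $\widetilde{\theta}(p_0 P p_0)\subseteq\mathcal{N}$. Since $P=\Nor_M(A)''$, it suffices that for every $u\in\Nor_M(A)$, the element $\widetilde{\theta}(p_0 u p_0)=v u v^*$ normalizes $B$ in $N$. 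A direct expansion gives $(vuv^*)B(vuv^*)^* = v(u p_0 A p_0 u^*)v^*$, which equals $B$ provided $u$ commutes with $p_0$. But $p_0$ only commutes with $pPp$, not with all of $P$, so this identity is not automatic. I would resolve the obstacle either by (i) showing instead that $vuv^*\in\qNor_N(B)$ and invoking Popa-style transfer results to extend amenability from $\Nor_N(B)''$ to $\qNor_N(B)''$, or (ii) first upgrading to $zA\prec_M N$ (which holds because $zA$ is unital in $zP$), choosing $p\in zA$, and thereby arranging $p_0\in A$ so that $up_0=p_0 u$ holds directly from $u\in\Nor_M(A)$.

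Granting the containment $\widetilde{\theta}(p_0 P p_0)\subseteq\mathcal{N}$, the subalgebra $\widetilde{\theta}(p_0 P p_0)$ of $\mathcal{N}$ is amenable, whence $p_0 P p_0 = p_0(zP)p_0$ is amenable via the isomorphism $\widetilde{\theta}$. Let $z_0\in Z(zP)$ denote the central support of $p_0$ in $zP$. The corner $p_0(z_0 zP)p_0 = p_0 P p_0$ is then a full corner of $z_0 zP$, and amenability transfers from a full corner to the ambient algebra by Morita equivalence. Hence $z_0 zP$ is the desired amenable direct summand of $zP$.
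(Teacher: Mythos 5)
There is a genuine gap, and it sits exactly at the step you yourself flag as ``the main obstacle.'' Your route via a partial isometry $v$ with $\theta(x)v=vx$ runs into the standard difficulty that conjugation by $v$ does not carry $\Nor_M(A)$ into $\Nor_N(B)$: for $u\in\Nor_M(A)$ the element $p_0up_0$ is not a unitary, and $vuv^*$ at best lands in a \emph{quasi}-normalizer after cutting by suitable central projections (this is Popa's Lemma 3.5, reproduced in the proof of Proposition \ref{Prop=LocateNormalizerNew}\eqref{Item=Embed2} of this paper). Neither of your proposed fixes closes this. For (i): strong solidity as defined here controls $\Nor_N(B)''$ only; amenability of $\qNor_N(B)''$ is a strictly stronger statement and there is no general ``transfer result'' one can invoke to upgrade one to the other. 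For (ii): passing to $zA\prec_M N$ is legitimate (intertwining does pass to unital subalgebras), but with $p\in zA$ you get $p_0=v^*v\in (pAp)'\cap pMp$, not $p_0\in A$; and even if $p_0$ did lie in $A$, a normalizing unitary $u$ satisfies $up_0u^*\in A$, not $up_0u^*=p_0$, so the claimed identity $up_0=p_0u$ does \emph{not} follow from $u\in\Nor_M(A)$. A further unjustified step earlier on: since $p\in zP$ rather than $p\in A'\cup A$, the set $pAp$ is not an algebra and $p_0$ commutes only with $p(zP)p$, so the assertion ``$p_0\in A'$, hence $p_0Ap_0=Ap_0$'' does not hold as written.

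The paper avoids all of this by using the other standard characterization of intertwining (\cite[Theorem 3.2.2]{VaesAnnENS}): $zP\prec_M N$ yields a normal unital $*$-homomorphism $\varphi\colon zP\to p(M_n(\mathbb{C})\otimes N)p$ defined on \emph{all} of $zP$ simultaneously. Then $\varphi$ carries unitaries of $zP$ normalizing $Az$ to unitaries normalizing $\varphi(Az)$, so $\varphi(Pz)\subseteq \Nor_{p(M_n(\mathbb{C})\otimes N)p}(\varphi(Az))''$ is immediate, and strong solidity of the amplification $p(M_n(\mathbb{C})\otimes N)p$ (\cite[Proposition 5.2]{HoudayerMathAnn}) gives amenability of $\varphi(Pz)$, hence an amenable direct summand of $Pz$. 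Your closing Morita-equivalence step is fine, but to repair the proof you should replace the partial-isometry argument by this amplification argument (or else genuinely carry out the quasi-normalizer analysis and prove the required quasi-normalizer version of strong solidity, which is a substantially harder task than the proposition itself).
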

\begin{proof}   We follow \cite[Proof of Corollary C]{VaesPrims}. As $zP \prec_M N$, using the characterization \cite[Theorem 3.2.2]{VaesAnnENS}, (following \cite{PopaInventionesI}), there exists a non-zero projection $p \in M_n(\mathbb{C}) \otimes N$ and a normal unital $\ast$-homomorphism $\varphi: zP \rightarrow p (M_n(\mathbb{C} ) \otimes N) p$. So $\varphi(Az)$ is a diffuse amenable von Neumann subalgebra of $M_n(\mathbb{C}) \otimes N$  and  $\widetilde{P} = \Nor_{ p (M_n(\mathbb{C} ) \otimes N) p}(\varphi(Az))''$  contains $\varphi(Pz)$. As $N$ is strongly solid, so is its amplification $p (M_n(\mathbb{C}) \otimes N)  p$ \cite[Proposition 5.2]{HoudayerMathAnn} and hence $\widetilde{P}$ is amenable. So  $\varphi(Pz)$ is amenable and therefore  $Pz$ contains an amenable direct summand.
\end{proof}

Now  let $K_{2,3}$ be the complete bipartite graph of $2+3$ vertices. More precisely,  the graph with vertices $a_1, a_2, b_1, b_2, b_3$  and with edges between each $a_i$ and $b_j$ for all $i,j$. We let $K_{2,3}^+$ be $K_{2,3}$ but with one extra edge connecting $b_1$ and $b_2$. Let $L$ be the graph with 3 vertices and no edges and let $L^+$ be the graph with 3 vertices and 1 edge. So $L$ is a subgraph of $K_{2,3}$ and $L^+$ is a subgraph of $K_{2,3}^+$.

We first characterize amenability.

 \begin{theorem}\label{Thm=AmenableCoxeter}
 Let $W$ be a right-angled Coxeter group. Then the following are equivalent
 \begin{enumerate}
     \item\label{Item=Amen1} $W$ is non-amenable
    \item\label{Item=Amen2} $W$ contains $\FF_2$ as a subgroup
    \item\label{Item=Amen3} $\Gamma$ contains $L$ or $L^+$ as a subgraph.
 \end{enumerate}
 \end{theorem}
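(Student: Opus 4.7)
The plan is to prove the cyclic chain $(2) \Rightarrow (1) \Rightarrow (3) \Rightarrow (2)$. The implication $(2) \Rightarrow (1)$ is immediate: amenability passes to subgroups and $\FF_2$ is non-amenable, so $W \supseteq \FF_2$ forces $W$ non-amenable.

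For $(3) \Rightarrow (2)$, I would observe that an induced copy of $L$ (resp.\ $L^+$) in $\Gamma$ produces a special subgroup $W_L = \ZZ_2 \ast \ZZ_2 \ast \ZZ_2$ (resp.\ $W_{L^+} = (\ZZ_2 \times \ZZ_2) \ast \ZZ_2$) inside $W$. Both are free products of non-trivial finite groups different from $D_\infty = \ZZ_2 \ast \ZZ_2$, and the classical fact I would invoke is that any such free product is virtually free but not virtually cyclic, hence contains $\FF_2$. For concreteness in the case of $W_L$, one can compute the rational Euler characteristic $\chi(W_L) = 3 \cdot \tfrac{1}{2} - 2 = -\tfrac{1}{2}$; the kernel of the sign homomorphism $W_L \to \ZZ_2$ has no torsion (by the Kurosh torsion theorem, every torsion element of a free product of $\ZZ_2$'s is conjugate to a generator, which maps to $1$), so by Kurosh it is free of rank $r$ with $1 - r = 2\chi(W_L) = -1$, giving $r = 2$. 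The same type of argument handles $W_{L^+}$.

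The main content is $(1) \Rightarrow (3)$, which I would prove by contrapositive: assume $\Gamma$ contains neither $L$ nor $L^+$ as an induced subgraph. Equivalently, every three-vertex subset of $\Gamma$ spans at least two edges. Passing to the complement, $\overline{\Gamma}$ is simultaneously triangle-free (no $L \cong \overline{K_3}$ in $\Gamma$) and induced-$P_3$-free (no $L^+$ in $\Gamma$ means no induced $P_3$ in $\overline{\Gamma}$). A graph that is both triangle-free and $P_3$-free must be a disjoint union of isolated vertices and isolated edges, i.e.\ a matching. Translating back, the vertex set of $\Gamma$ partitions into pairs $\{x_i, y_i\}$ with $x_i \not\sim y_i$ in $\Gamma$, together with singletons $\{z_j\}$, and every pair of vertices in distinct blocks is adjacent in $\Gamma$. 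Hence $\Gamma$ is the graph-theoretic join of these blocks, so
\[
W_\Gamma \;=\; \prod_i W_{\{x_i,y_i\}} \times \prod_j W_{\{z_j\}} \;=\; \prod_i D_\infty \times \prod_j \ZZ_2,
\]
which is a finite direct product of amenable groups and is therefore amenable.

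The conceptual crux is the graph-theoretic step in the last paragraph: recognising that the combinatorial condition ``every triple of vertices spans at least two edges'' is rigid enough to force the join decomposition of $\Gamma$ into pairs and singletons, and hence the direct-product decomposition of $W_\Gamma$ into copies of $D_\infty$ and $\ZZ_2$. The remaining ingredients — computing a graph product over a join as a direct product, amenability of $D_\infty$, and closure of amenability under subgroups and finite direct products — are standard.
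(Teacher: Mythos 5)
Your proof is correct, but both nontrivial implications take a genuinely different route from the paper. For $(1)\Rightarrow(3)$ the paper runs an induction on the number of vertices: the absence of $L$ and $L^+$ forces $|\Gamma\setminus\Star(v)|\le 1$ for every $v$, which splits off a direct factor $W_{\{v\}\cup(\Gamma\setminus\Star(v))}$ isomorphic to $\ZZ_2$ or $\ZZ_2\ast\ZZ_2$, the complementary factor being handled by the induction hypothesis. You reach the same conclusion in one global step by passing to the complement graph $\overline{\Gamma}$, observing it is triangle-free and induced-$P_3$-free, hence a matching plus isolated vertices, so that $\Gamma$ is a join of blocks and $W_\Gamma\cong\prod D_\infty\times\prod\ZZ_2$; this is exactly the decomposition the paper assembles inductively, obtained at once and arguably more transparently. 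For $(3)\Rightarrow(2)$ the paper simply exhibits $u,w\notin\Link(v)$ and asserts that $uv,wv$ generate $\FF_2$, whereas you invoke the classical fact that a free product of nontrivial finite groups other than $\ZZ_2\ast\ZZ_2$ contains $\FF_2$, supported by a Kurosh/Euler-characteristic computation. Your heavier route is actually the more robust one: in the $L^+$ case the paper's explicit pair is \emph{not} free, since $u$ and $w$ commute there and $(uv)(wv)^{-1}=uw$ has order two, so an argument of your kind is genuinely needed. One small wrinkle on your side: for $W_{L^+}=(\ZZ_2\times\ZZ_2)\ast\ZZ_2$ the kernel of the sign homomorphism to $\ZZ_2$ is not torsion-free (it contains $uw$), so ``the same type of argument'' requires a different finite quotient, e.g.\ $(\ZZ_2)^2$ with $u\mapsto(1,0)$, $w\mapsto(0,1)$, $v\mapsto(1,1)$; your prior appeal to the general classical fact covers this case in any event.
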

\begin{proof}
\eqref{Item=Amen3} implies \eqref{Item=Amen2}. Suppose $\Gamma$ contains $L$ or $L^+$ as a subgraph. Then there are $u,v,w\in \Gamma$ such that $u,w\not\in \Link(v)$. But then $\{uv,wv\}$ generate $\FF_{2}$, the free group of two generators.

\eqref{Item=Amen2} implies \eqref{Item=Amen1}. Suppose $\FF_2\subseteq W$. Then since $\FF_2$ is non-amenable also $W$ is not amenable.

\eqref{Item=Amen1} implies \eqref{Item=Amen3}.
We prove the implication by induction to the size of the graph. Let $\Gamma$ be a non-empty graph, and assume the claim has been proven for strict subgraphs of $\Gamma$. Suppose $L$ and $L^+$ are not subgraphs of $\Gamma$. For every $v\in \Gamma$ we have that $\Gamma_{v}:=\Gamma\setminus \Star(v)$ contains at most $1$ element. Indeed, if $u,w\in \Gamma_{v}$ with $u\not=w$ then $\{u,v,w\}$ is a subgraph of $\Gamma$ that is either isomorphic to $L$ (when $u\not\in \Link(w)$) or to $L^+$ (when $u\in \Link(w)$).

Fix $v\in \Gamma$ and set $\Lambda_1 := \{v\}\cup \Gamma_{v}$ and $\Lambda_2 := \Gamma\setminus \Lambda_1$. We show $W_{\Gamma} = W_{\Lambda_1}\times W_{\Lambda_2}$. Indeed, if $\Gamma_{v}$ is empty, then $\Star(v) = \Gamma$ so that $\Lambda_1 = \{v\} \subseteq  \Link(\Gamma\setminus \{v\}) = \Link(\Lambda_2)$ from which the decomposition follows. Now suppose $\Gamma_v$ is non-empty, then $\Gamma_{v} = \{w\}$ for some $w\in \Gamma\setminus \Star(v)$ so that $v\not\in \Star(w)$ and thus $v\in \Gamma_{w}$. Hence $\Gamma_{w} = \{v\}$ since it contains at most $1$ element. Then as $\Gamma = \Gamma_{v}\cup \Star(v) = \{w\}\cup \Star(v)$ and $\Gamma = \Gamma_{w}\cup \Star(w) = \{v\} \cup \Star(w)$ we obtain $\Lambda_2=\Gamma\setminus\{v,w\}\subseteq \Star(v)\cap \Star(w) \subseteq \Link(\{v,w\}) = \Link(\Lambda_1)$ and the decomposition follows.

Observe that either $W_{\Lambda_1}=\ZZ_2$ (when $\Gamma_v$ is empty) or $W_{\Lambda_1} =  \ZZ_2*\ZZ_2$ (when $\Gamma_{v}$ is non-empty). In either case we obtain that $W_{\Lambda_1}$ is amenable (as it has polynomial growth). Furthermore, as $\Lambda_2\subseteq \Gamma\setminus\{v\}$ is a strict subgraph of $\Gamma$, and as $\Lambda_2$ does not contain $L$ or $L^+$ as a subgraph (as this holds for $\Gamma$) we obtain by the induction hypothesis that $W_{\Lambda_2}$ is amenable as well. From the decomposition for $W_{\Gamma}$ it now follows that $W_{\Gamma}$ is amenable.
\end{proof}

 The following is the main theorem of this paper.

\begin{theorem}\label{Thm=MainImplication}
Let $W=W_{\Gamma}$ be a right-angled Coxeter group with graph $\Gamma$. Suppose $\Gamma$ does not contain $K_{2,3}$ or $K_{2,3}^+$ as a subgraph. Then $\cL(W)$ is strongly solid.
\end{theorem}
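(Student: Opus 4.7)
The plan is strong induction on $|\Gamma|$, exploiting the amalgamated free product decompositions $M = M_{\Star(v)} *_{M_{\Link(v)}} M_{\Gamma\setminus\{v\}}$ of \cref{prop:graph-product-decomposition} at every vertex $v$ simultaneously. Two preliminary reductions trivialize the easy cases. First, by \cref{Thm=AmenableCoxeter}, if $\Gamma$ contains neither $L$ nor $L^+$ then $W_\Gamma$ is amenable and $\cL(W_\Gamma)$ is automatically strongly solid. Second, if $\Gamma$ has a universal vertex $v$, then $\cL(W_\Gamma)\cong \cL(W_{\Gamma\setminus\{v\}})\oplus \cL(W_{\Gamma\setminus\{v\}})$, reducing strong solidity to the induction hypothesis on $\Gamma\setminus\{v\}$ (which still avoids $K_{2,3}$ and $K_{2,3}^+$). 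So I may assume $\Gamma$ contains $L$ or $L^+$ and has no universal vertex; equivalently $\bigcap_{v\in\Gamma}\Link(v)=\emptyset$.

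Fix a diffuse amenable $A\subseteq M := M_\Gamma$, put $P = \Nor_M(A)''$, and let $z\in \calZ(P)$ be the maximal central projection with $zP$ amenable. Set $r := \mathbf{1}-z$; aiming for a contradiction I assume $r\neq 0$, so $rP$ has no nonzero amenable direct summand. For every vertex $v$, \cref{thm:prelim:alternatives} applied to the AFP decomposition at $v$ yields that one of (i)$_v$ $rA\prec_M M_{\Link(v)}$, (ii)$_v$ $rP \prec_M M_{\Star(v)}$ or $rP\prec_M M_{\Gamma\setminus\{v\}}$, (iii)$_v$ $rP$ is amenable relative to $M_{\Link(v)}$, must hold. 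If (iii)$_v$ holds for every $v$, then since $\bigcap_v \Link(v)=\emptyset$, \cref{Thm=Square} with $\Gamma_v:=\Link(v)$ forces $rP$ amenable, a contradiction. If (ii)$_{v_0}$ holds for some $v_0$, then $\Star(v_0)$ and $\Gamma\setminus\{v_0\}$ are strict subgraphs (no universal vertex), so $M_{\Star(v_0)}$ and $M_{\Gamma\setminus\{v_0\}}$ are strongly solid by induction, and \cref{Prop=EmbedSolid} produces an amenable direct summand of $rP$, a contradiction.

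Therefore at some $v_0$ we must be in case (i)$_{v_0}$. I refine to a subgraph $\Lambda\subseteq \Link(v_0)$ that is \emph{minimal} with $rA \prec_M M_\Lambda$; then by \cref{Prop=LocateNormalizerNew}\eqref{Item=Embed2} we obtain $rP \prec_M M_{\Lambda\cup \Link(\Lambda)}$. If $\Lambda\cup\Link(\Lambda)\subsetneq \Gamma$, then $M_{\Lambda\cup\Link(\Lambda)}$ is strongly solid by induction, and \cref{Prop=EmbedSolid} once again extracts the contradicting amenable summand of $rP$. So the decisive scenario is $\Lambda\cup\Link(\Lambda) = \Gamma$, i.e.\ $\Gamma$ splits as the join $\Lambda\ast \Link(\Lambda)$, and $M_\Gamma = M_\Lambda\otimes M_{\Link(\Lambda)}$.

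This last case is the main obstacle, and it is where the combinatorial hypothesis on $K_{2,3}$ and $K_{2,3}^+$ is used essentially. Since $rA$ is diffuse, $W_\Lambda$ is infinite, so $\Lambda$ contains a non-adjacent pair $\{a_1,a_2\}$; then $\Link(\Lambda)\subseteq \Link(a_1)\cap \Link(a_2)$, and the exclusion of $K_{2,3}$ and $K_{2,3}^+$ combined with \cref{Thm=AmenableCoxeter} forces $W_{\Link(\Lambda)}$ to be amenable. Moreover a short combinatorial check — any two vertices of $\Link(\Lambda)$ together with three pairwise non-adjacent vertices of $\Lambda$ would span $K_{2,3}$ or $K_{2,3}^+$ in $\Gamma$ — rules out $L\subseteq \Lambda$ unless $\Link(\Lambda)$ consists of a single universal vertex, contradicting our reductions. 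Since $\Lambda\subsetneq \Gamma$, $M_\Lambda$ is strongly solid by induction. The idea is then to choose Popa's intertwining target $q$ inside $M_\Lambda$ (via \cref{Rmk=Support}), so that $qMq = (qM_\Lambda q)\otimes M_{\Link(\Lambda)}$ and the image $\theta(prAp)$ lies in $qM_\Lambda q$; strong solidity of $M_\Lambda$ together with amenability of $M_{\Link(\Lambda)}$ yields a normalizer analysis inside this tensor decomposition that produces an amenable direct summand of $rP$, contradicting the choice of $r$ and finishing the proof.
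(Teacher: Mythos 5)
Your overall architecture coincides with the paper's: induction on $|\Gamma|$, reduction to the case of no universal vertex, the three alternatives of \cref{thm:prelim:alternatives} at every vertex, \cref{Thm=Square} when alternative (iii) holds everywhere (note that $\bigcap_{v\in\Gamma}\Link(v)=\emptyset$ holds automatically for any nonempty simple graph, so your ``equivalently'' is a slip, though a harmless one), \cref{Prop=EmbedSolid} for alternative (ii), and a minimal $\Lambda$ together with \cref{Prop=LocateNormalizerNew}\eqref{Item=Embed2} for alternative (i). The genuine gap is in the terminal case $\Gamma=\Lambda\cup\Link(\Lambda)$, and it is twofold. First, your combinatorial claim is false: two \emph{adjacent} vertices of $\Link(\Lambda)$ together with an induced $L$ in $\Lambda$ span $K_{2,3}$ plus an edge on the two-element side, which contains neither $K_{2,3}$ nor $K_{2,3}^+$ as a subgraph in the paper's (induced) sense; the correct conclusion is only that $\Link(\Lambda)$ is a clique whenever $\Lambda$ contains $L$ or $L^+$. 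You also never address $L^+\subseteq\Lambda$, so non-amenability of $W_\Lambda$ is not excluded. Second, the closing sentence is not a proof: ``strong solidity of $M_\Lambda$ together with amenability of $M_{\Link(\Lambda)}$ yields a normalizer analysis \dots that produces an amenable direct summand'' appeals to precisely the principle this theorem is designed to delimit --- $\cL(\FF_2)\otimes\cL(\ZZ)$ is strongly solid tensor amenable and is \emph{not} strongly solid --- so real work on normalizers in tensor products would be required, and none is supplied.

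The case can be closed with no tensor-product normalizer analysis at all, and this is what the paper does. If $W_\Lambda$ is non-amenable, then $\Lambda$ contains $L$ or $L^+$ by \cref{Thm=AmenableCoxeter}, hence $\Link(\Lambda)$ is a clique, $M_{\Link(\Lambda)}$ is finite-dimensional, and $M_\Gamma\cong \Mat_N(\CC)\otimes M_\Lambda$ is strongly solid by the induction hypothesis and stability of strong solidity under amplification (in the paper's organization this subcase never reaches $\Gamma=\Lambda\cup\Link(\Lambda)$, since $\Link(\Lambda)$ being a clique containing $v_0$ gives $\Lambda\cup\Link(\Lambda)\subseteq\Star(v_0)\subsetneq\Gamma$, landing back in the strict-subgraph case). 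If $W_\Lambda$ is amenable, then your own observation that $\Link(\Lambda)\subseteq\Link(a_1)\cap\Link(a_2)$ for a non-adjacent pair $a_1,a_2\in\Lambda$ forces $W_{\Link(\Lambda)}$ to be amenable as well, so $M_\Gamma=M_\Lambda\otimes M_{\Link(\Lambda)}$ is amenable and strong solidity is trivial. With this replacement of your last paragraph, the argument matches the paper's proof.
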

\begin{proof}

The proof is based on induction to the number of vertices of the graph. The statement clearly holds when $\Gamma=\emptyset$ since in that case $\cL(W_{\Gamma}) =\CC$ is strongly solid.

\vspace{0.3cm}

\noindent {\it Induction.}
Let $\Gamma$ be a non-empty graph, and assume by induction that \cref{Thm=MainImplication} is proved for any strictly smaller subgraph of $\Gamma$, i.e. with fewer vertices.
Assume  $K_{2,3}$ and $K_{2,3}^+$ are not subgraphs of $\Gamma$. We shall show that $M_\Gamma := \cL(W_\Gamma)$ is strongly solid. Let $A \subseteq M$ be diffuse and amenable and denote $P = \Nor_M(A)''$. We will show that $P$ is amenable.
We put $\Gamma' := \bigcap_{v\in \Gamma} \Star(v)$ which is a complete, possibly empty graph. Since $\Link(\Gamma') = \Gamma\setminus \Gamma'$ we have $W = W_{\Gamma'}\times W_{\Gamma\setminus \Gamma'}$ and so $M_\Gamma = M_{\Gamma'} \otimes M_{\Gamma\setminus \Gamma'}$. As $W_{\Gamma'}$ is finite (by completeness of $\Gamma'$) we have that $M_{\Gamma'}$ is isomorphic to a subalgebra of $\Mat_{N}(\CC)$, the space of $N\times N$ matrices with $N := |W_{\Gamma'}|$ being the size of $W_{\Gamma'}$. Moreover, if $\Gamma'$ is non-empty, then $M_{\Gamma\setminus \Gamma'}$ is strongly solid by the induction hypothesis, so that it follows from \cite[Proposition 5.2]{HoudayerMathAnn} and the fact that strong solidity passes to subalgebras that $M_\Gamma  \subseteq \Mat_{N}(\CC) \otimes M_{\Gamma\setminus \Gamma'}$ is strongly solid as well.

Hence we may assume that $\Gamma'=\emptyset$. Thus for all $v\in \Gamma$ we obtain $\Star(v)\not=\Gamma$ (since otherwise $v\in \Gamma'$).
Pick $v \in \Gamma$ and set $\Gamma_1:=\Star(v)$ and  $\Gamma_2 := \Gamma \backslash v$. By \cref{prop:graph-product-decomposition} we can decompose  $M_\Gamma = M_{\Gamma_1} \ast_{ M_{\Gamma_1 \cap \Gamma_2}} M_{\Gamma_2}$. Moreover, as $\Gamma_1$, $\Gamma_2$ and $\Gamma_{1}\cap \Gamma_2$ are strict subgraphs of $\Gamma$ we obtain by our induction hypothesis that $M_{\Gamma_1}$, $M_{\Gamma_2}$ and $M_{\Gamma_1\cap \Gamma_2}$ are strongly solid.

Let $z\in P\cap P'$ be a central  projection such that $z P$ has no amenable direct summand. Note that $zP \subseteq \Nor_{z M_\Gamma z}( zA)''$. As $zA$ is amenable, it is amenable relative to $M_{\Gamma_1}$ in $M_{\Gamma}$. Therefore by \cref{thm:prelim:alternatives} at least one of the following three holds.
\begin{enumerate}
 \item\label{Case1} $z A \prec_{M_\Gamma} M_{\Gamma_1 \cap \Gamma_2}$,
 \item\label{Case2} There is $i \in \{ 1, 2\}$ such that $z P \prec_{M_\Gamma} M_{\Gamma_i}$,
 \item\label{Case3} $z P$ is amenable relative to $M_{\Gamma_1 \cap \Gamma_2}$ inside $M_\Gamma$.
\end{enumerate}
We now analyse each of the cases.

\vspace{0.3cm}

\noindent {\it Case \eqref{Case2}.} In Case \eqref{Case2} we have that Proposition \ref{Prop=EmbedSolid} together with the induction hypothesis shows that $zP$ has an amenable direct summand in case $z \not = 0$. This is a contradiction so we conclude $z=0$ and hence $P$ is amenable.

\vspace{0.3cm}

\noindent {\it Case \eqref{Case1}.}  In Case \eqref{Case1} we  first prove the following claim.

\vspace{0.3cm}

\noindent {\it Claim:} At least one of the following holds:
\begin{enumerate}
    \item[(a)] There is a vertex $w \in \Gamma \backslash \Gamma_1$ such that $zP \prec_{M_\Gamma} M_{\Gamma \backslash w}$.
    \item[(b)]   $M_\Gamma$ is amenable.
\end{enumerate}

\vspace{0.3cm}

\noindent {\it Proof of the claim.}
Since $zA\prec_{M_\Gamma} M_{\Gamma_1\cap \Gamma_2}$ but $zA\not\prec_{M_\Gamma} \CC = M_{\emptyset}$ there is a non-empty subgraph $\Lambda \subseteq \Gamma_1 \cap \Gamma_2$ such that $zA \prec_{M_\Gamma} M_\Lambda$ but $zA \not \prec_{M_\Gamma} M_{\Lambda \backslash \{ w \}}$ for all $w \in \Lambda$.
There are two cases.

 First assume that $M_\Lambda$ is non-amenable, equivalently, by Theorem \ref{Thm=AmenableCoxeter}, $\Lambda$  contains $L$ or $L^+$. But then  $\Link(\Lambda)$  must be a clique as otherwise   $K_{2,3}$ or $K_{2,3}^+$ would be a subgraph of $\Gamma$.
Recall that we fixed $v \in \Gamma$ in the second paragraph of the induction part of this proof and we have set $\Gamma_1 = \Star(v)$ and $\Gamma_2 = \Gamma \backslash v$. Now as $\Lambda \subseteq \Gamma_1 \cap \Gamma_2 = \Link(v)$ we have $v \in \Link(\Lambda)$.
  Combining this with the fact that $\Link(\Lambda)$ is a clique it must follow that $\Link(\Lambda) \subseteq \Star(v) = \Gamma_1$.  As $\Gamma \backslash \Gamma_1$ was assumed to be non-empty   we may pick any vertex in $\Gamma \backslash \Gamma_1$ and we have proved (a) by Proposition \ref{Prop=LocateNormalizerNew} where $r = z$.

 Second, assume that $M_\Lambda$ is amenable. Note that $\Lambda$ must contain at least two points not connected by an edge as otherwise $M_\Lambda$ is finite dimensional and  $zA \prec_{M_\Gamma} M_{\Lambda}$ with $zA$ diffuse leads to a contradiction.
 If $\Lambda \cup \Link(\Lambda)$ is not equal to $\Gamma$ then any $w \in \Gamma \backslash (\Lambda \cup \Link(\Lambda))$ will yield (a) through Proposition \ref{Prop=LocateNormalizerNew}   as in the previous paragraph and the claim is proved.  So we assume $\Gamma  = \Lambda \cup \Link(\Lambda)$. But then   $\Gamma \backslash \Gamma_1$ contains at most one point,   since otherwise it contradicts that $K_{2,3}$ or $K_{2,3}^+$ is not a subgraph of $\Gamma$. Since we assumed  $\Gamma \backslash \Gamma_1$ is non-empty we see that $\Gamma \backslash \Gamma_1$  consists of exactly one point, say $x$. But then $\Gamma = \{ v, x \} \times \Link(v)$ and $M_\Gamma = M_{v,x} \otimes M_{\Link(v)}$ which is amenable and we are in case (b).

 \vspace{0.3cm}

 \noindent {\it Remainder of the proof of Case \eqref{Case1}.}
  In case (b) of the claim strong solidity is trivial. In case (a) of the claim it follows from Proposition \ref{Prop=EmbedSolid} that $zP$ with $z \not = 0$ contains an  amenable direct summand which is a contradiction. So $z=0$ and $P$ is amenable.  This concludes the proof in case \eqref{Case1}.

 \vspace{0.3cm}

 \noindent {\it Remainder of the proof of the main theorem in the situation that  Case \eqref{Case1} and Case \eqref{Case2} never occur.}
We first recall that if we can find a single vertex $v$ as above such that we are in case  \eqref{Case1} or \eqref{Case2} then the proof is finished. Otherwise for all vertices $v \in \Gamma$  we are in case  \eqref{Case3}.  So  $zP$ is amenable relative to $M_{\Link(v)}$ inside $M_\Gamma$. As $\bigcap_{v\in \Gamma}\Link(v)\subseteq \bigcap_{v\in \Gamma}\Gamma\setminus\{v\}=\emptyset$ we obtain  by using \cref{Thm=Square} repeatedly that $zP$ is amenable relative to $\CC$, i.e. $zP$ is amenable \cite[Proposition 2.4 (2)]{OzawaPopaAnnals}. So $z =0$ and we conclude again that $P$ is amenable.
\end{proof}

We now summarize our results.

\begin{theorem}\label{Thm=Dichotomy}
  Let $W = W_\Gamma$ be a right-angled Coxeter group. The following are equivalent:
  \begin{enumerate}
  \item\label{ItemSolid=1} $M_\Gamma = \cL(W_\Gamma)$ is not strongly solid.
  \item\label{ItemSolid=2} $W_\Gamma$ contains $\mathbb{Z} \times \mathbb{F}_2$ as a subgroup.
  \item\label{ItemSolid=3} $\Gamma$ contains $K_{2,3}$ or $K_{2,3}^+$ as a subgraph.
  \end{enumerate}
\end{theorem}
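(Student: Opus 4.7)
The plan is to establish the cycle of implications \eqref{ItemSolid=3} $\Rightarrow$ \eqref{ItemSolid=2} $\Rightarrow$ \eqref{ItemSolid=1} $\Rightarrow$ \eqref{ItemSolid=3}. The third arrow is simply the contrapositive of the main result \cref{Thm=MainImplication}, so the genuine content is the other two implications, both of which should be short given the machinery already developed.

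For \eqref{ItemSolid=3} $\Rightarrow$ \eqref{ItemSolid=2}, I would argue by direct construction inside $W_\Gamma$. Suppose $\Gamma$ contains $K_{2,3}$ or $K_{2,3}^+$ with $a$-vertices $\{a_1,a_2\}$ and $b$-vertices $\{b_1,b_2,b_3\}$. Since $a_1,a_2$ do not share an edge, $W_{\{a_1,a_2\}} = \ZZ_2 \ast \ZZ_2 = D_\infty$, so the element $\gbold := a_1 a_2$ has infinite order. Moreover each $a_i$ is linked to each $b_j$, so $\gbold$ commutes with every $b_j$, and hence centralizes all of $W_{\{b_1,b_2,b_3\}}$. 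On the $b$-side, the induced subgraph on $\{b_1,b_2,b_3\}$ is either $L$ (for $K_{2,3}$) or $L^+$ (for $K_{2,3}^+$), so by \cref{Thm=AmenableCoxeter} the special subgroup $W_{\{b_1,b_2,b_3\}}$ contains $\FF_2$. Combining, $\langle \gbold \rangle \times W_{\{b_1,b_2,b_3\}}$ gives a copy of $\ZZ \times \FF_2$ inside $W_\Gamma$.

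For \eqref{ItemSolid=2} $\Rightarrow$ \eqref{ItemSolid=1}, I would use $\cL(\ZZ)$ as the obstruction to strong solidity. Fix an embedding $\ZZ \times \FF_2 \hookrightarrow W_\Gamma$, let $u$ be the image of a generator of $\ZZ$, and set $A := \{u\}'' \subseteq \cL(W)$. Then $A \cong L^\infty(\TT)$ is diffuse and abelian, hence amenable. Every group element of $\ZZ \times \FF_2$ centralizes $u$ and therefore gives a unitary in $\Nor_{\cL(W)}(A)$, so
\[
\Nor_{\cL(W)}(A)'' \supseteq \cL(\ZZ \times \FF_2) \supseteq \cL(\FF_2).
\]
Since $\cL(\FF_2)$ is non-amenable, $\Nor_{\cL(W)}(A)''$ fails to be amenable and $\cL(W)$ is not strongly solid.

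Finally, for \eqref{ItemSolid=1} $\Rightarrow$ \eqref{ItemSolid=3}, I would simply contrapose: if $\Gamma$ contains neither $K_{2,3}$ nor $K_{2,3}^+$, then \cref{Thm=MainImplication} asserts that $\cL(W_\Gamma)$ is strongly solid. No step here is an obstacle; all three arrows are short once \cref{Thm=MainImplication} and \cref{Thm=AmenableCoxeter} are in place. The only small point to be careful about is verifying that $\gbold = a_1 a_2$ genuinely has infinite order and genuinely commutes with the $b$-vertices, which follows immediately from the graph product presentation.
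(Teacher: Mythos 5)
Your proposal is correct and follows essentially the same route as the paper: the implication \eqref{ItemSolid=3} $\Rightarrow$ \eqref{ItemSolid=2} via the direct-product structure of $W_{K_{2,3}}$ and $W_{K_{2,3}^+}$ (your explicit element $a_1a_2$ is just the generator of the $D_\infty = \ZZ_2 \ast \ZZ_2$ factor the paper uses), \eqref{ItemSolid=2} $\Rightarrow$ \eqref{ItemSolid=1} via the diffuse abelian subalgebra $\cL(\ZZ)$ whose normalizer contains the non-amenable $\cL(\FF_2)$, and \eqref{ItemSolid=1} $\Rightarrow$ \eqref{ItemSolid=3} as the contrapositive of \cref{Thm=MainImplication}. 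The only cosmetic difference is that you run the normalizer argument directly inside $\cL(W)$ rather than first showing $\cL(\ZZ\times\FF_2)$ is not strongly solid and then invoking that strong solidity passes to von Neumann subalgebras; both are valid.
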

\begin{proof}  We first collect the easy implications. \eqref{ItemSolid=3} implies \eqref{ItemSolid=2} follows as the graph product $\ast_{K_{2,3}} \mathbb{Z}_2$ equals $(\mathbb{Z}_2 \ast \mathbb{Z}_2) \times (\mathbb{Z}_2 \ast \mathbb{Z}_2 \ast \mathbb{Z}_2) \simeq D_\infty \ast (D_\infty \ast \mathbb{Z}_2)$ and $(D_\infty \ast \mathbb{Z}_2)$ contains $\mathbb{F}_2$ as a subgroup. We have that  $\ast_{K_{2,3}^+} \mathbb{Z}_2 = (\mathbb{Z}_2 \ast \mathbb{Z}_2) \times ((\mathbb{Z}_2 \times \mathbb{Z}_2) \ast \mathbb{Z}_2)$ which contains again $\mathbb{Z} \times \mathbb{F}_2$.

\eqref{ItemSolid=2} implies \eqref{ItemSolid=1} follows as $\cL(\mathbb{Z} \times \mathbb{F}_2) \simeq \cL(\mathbb{Z}) \otimes \cL(\mathbb{F}_2)$. As  $\cL(\mathbb{Z}) \otimes 1$ is diffuse and amenable and its relative commutant contains  $\cL(\mathbb{Z}) \otimes \cL(\mathbb{F}_2)$ we see that its normalizer cannot generate an amenable von Neumann algebra. Therefore  $\cL(\mathbb{Z} \times \mathbb{F}_2)$ is not strongly solid and so neither  is $\cL(W)$ as strong solidity passes to von Neumann subalgebras.
 \eqref{ItemSolid=1} implies \eqref{ItemSolid=3} is proved in Theorem \ref{Thm=MainImplication}.
\end{proof}

\begin{remark}
    In particular Theorem \ref{Thm=Dichotomy} implies the following purely group theoretical result. If $W$ is a right-angled Coxeter group that contains $\mathbb{Z} \times \mathbb{F}_2$ as a subgroup, then actually   it contains   $D_\infty \times (\mathbb{Z}_2 \ast \mathbb{Z}_2 \ast \mathbb{Z}_2)$ or $D_\infty \times (\mathbb{Z}_2 \ast (\mathbb{Z}_2 \times \mathbb{Z}_2))$ as a special Coxeter subgroup.
\end{remark}

\printbibliography

\end{document}